\crefname{hypothesis}{Hypothesis}{Hypotheses}
\title{A three-operator splitting  algorithm for nonconvex sparsity regularization\thanks{
{This work was supported by  NSFC (No.11771288, 91630311) and  National key research and development program (No.2017YFB0202902). We thank the Student Innovation Center at Shanghai Jiao Tong University for providing us the computing services.}}}
\author{Fengmiao Bian\thanks{School of Mathematical Sciences, Shanghai Jiao Tong University, Shanghai 200240, CHINA
  (\email{bianfm17@sjtu.edu.cn}).}
\and Xiaoqun Zhang\thanks{School of Mathematical Sciences and Institute of Natural Sciences, Shanghai Jiao Tong University, Shanghai 200240, CHINA
  (\email{xqzhang@sjtu.edu.cn}).}}
\begin{document}

\maketitle

\begin{abstract}
Sparsity regularization has been largely applied in many fields, such as signal and image processing and machine learning. In this paper, we mainly consider nonconvex minimization problems involving three terms, for the applications such as: sparse signal recovery and low rank matrix recovery. We employ a three-operator splitting proposed by Davis and Yin \cite{DY} (called DYS) to  solve the resulting possibly nonconvex problems and develop the convergence theory for this three-operator splitting algorithm in the nonconvex case. We show that if the step size is chosen less than a computable threshold, then the whole sequence converges to a stationary point. By defining a new decreasing energy function associated with the DYS method, we establish the global convergence of the whole sequence and a local convergence rate under an additional assumption that this energy function is a Kurdyka-\L ojasiewicz function. We also provide sufficient conditions for the boundedness of the generated sequence. Finally, some numerical experiments are conducted to compare the DYS algorithm with some classical efficient algorithms for sparse signal recovery and low rank matrix completion. The numerical results indicate that DYS method outperforms the exsiting methods for these specific applications.
 \end{abstract}

\begin{keywords}
three-operator splitting method; sparsity regularization; nonconvex optimization; sparse signal recovery; low rank matrix completion
\end{keywords}

\begin{AMS}
90C26, 90C30, 90C90,15A83,65K05
\end{AMS}

\section{Introduction}
Sparsity regularization has been largely applied in many fields , such as signal and image processing and machine learning. In this paper, we mainly consider two applications that involving minimization of three terms with possibly nonconvex functions. For example, in \cite{ELX} Esser, Lou, and Xin first proposed using the difference of $l_1$ and $l_2$ norms as sparse regularization term. Later,  the authors in \cite{LYHX, YLHX} applied this $l_{1-2}$ metric to solve the sparse recovery problem in signal processing. In fact, the $l_{1-2}$ metric has indicated its advantages in other kinds of applications, for instance, image restoration \cite{LZOX}, phase retrieval \cite{YX}, and the anisotropic and isotropic forms of total variation discretizations \cite{YZY}. For low rank matrix recovery problems, in \cite{JMD}, Jain, Meka and Dhillon proposed a simple and fast algorithm for rank minimization under affine constraints. In \cite{CCS}, Cai, Cand\`{e}s and Shen introduced a novel algorithm to approximate the matrix with minimum nuclear norm among all matrices obeying a set of convex constraints. In \cite{CTCB}, Cabral, Torre, Costeira and Bernardino proposed a unified model to nuclear norm regularization and bilinear factorization for low-rank matrix decomposition and analyzed the conditions under which these approaches are equivalent. In general, these models can be formulated as the following type of nonconvex minimization problem:  
\begin{equation}\label{model}
\min_{x}    F(x) + G(x) + H(x),
\end{equation}
for example, for sparse recovery problems in \cite{LYHX, YLHX}, the corresponding function $F$ can be the data term, $G$ is the $l_1$ norm and $H$ is the negative $l_2$ norm; for low rank matrix decomposition in \cite{CTCB}, the corresponding function $F$ is the loss function, the functions $G$ and $H$ are regularizations of the each factorization. 

In the last decade, several optimization algorithms have been designed to work out model \eqref{model} in the noncovex setting. Most of them focus on splitting algorithms and most are based on two famous algorithms: the alternating direction method of multipliers (ADMM) and the Douglas-Rachford splitting (DRS). For the ADMM algorithm, several articles have been devoted to solving problems with a similar structure to \eqref{model}. In \cite{YPC}, Yang, Pong and Chen proved the global convergence of ADMM algorithm under the conditions that one of the summands is convex, the other is possibly nonconvex and nonsmooth, and the third is the Fr\"obenius norm. In \cite{WYZ}, Wang, Yin and Zeng considered a general nonconvex optimization problem with coupled linear equality constraints. By assuming that the objective function is continuous and coercive over the feasible set, while its nonsmooth part is either restricted prox-regular or piecewise linear, and then the authors analyzed the convergence of the sequence. In \cite{LSG} similar techniques are also used in the convergence results for a nonconvex linearized ADMM algorithm. Bolte, Sabach, and Teboulle formulated in \cite{BST}, also in the nonconvex setting, a proximal alternating linearization method (PALM) for solving minimizing objective functions consisting of three summands: two nonsmooth functions and a smooth function which couples the two block variables.  In \cite{BCN}, Bot, Csetnek and Nguyen proposed a proximal ADMM algorithm for a class of similar objective functions consisting of three summands, but one of which is the  composition of a nonsmooth function with a linear operator, and  they proved that any cluster point of the sequence is a KKT point of the minimization problem. We can see that from above, there have been many theoretical analyses on ADMM algorithm in the nonconvex and nonsmooth setting. However, there are only a few works on DRS method for nonconvex nonsmooth optimization problem. For the model \eqref{model} when $H=0$, Li and Pong in \cite{LP} applied the DRS algorithm to the nonconvex feasibility problems and established the convergence of the DRS method when $F$ has a Lipschitz continuous gradient and $G$ is a possibly nonconvex nonsmooth function. In \cite{GHY}, when $F$ is strongly convex, $G$ is weakly convex, and $F + G$ is strongly convex, Guo, Han and Yuan showed that the sequence generated by DRS method is Fej\`{e}r monotone with respect to the set of fixed points of DRS operator, thus convergent. In \cite{LLP}, Li, Liu and Pong showed that a variant of DRS method, i.e., Peaceman-Rachford splitting, is convergent under the assumptions that $F$ is  a strongly convex Lipschitz differentiable function and $G$ is a nonconvex nonsmooth function. In \cite{TP}, Themelis and Patrinos employed the Douglas-Rachford envelope to unify and simplify the global convergence theory for ADMM, DRS and PRS in nonconvex setting. In \cite{BZ}, we generalized the DRS algorithm and proved its global convergence under the similar conditions in \cite{LP}. It is shown that this parameterized DRS algorithm perform well for some applications in data sicence.

%
%

Recently, Davis and Yin \cite{DY} proposed a new three-operator splitting method, called Davis-Yin splitting (DYS), for solving inclusion problems with three maximal monotone operators by designing a nicely behaved fixed-point equation, which extends the Douglas-Rachford and forward-backward equations. Since the subdifferentials of nonconvex functions are generally non-monotone, the existing results in \cite{DY} apply only to model \eqref{model} when $F$, $G$ and $H$ are all convex functions. In this paper, we intend to apply DYS method to resolve nonconvex problems with three terms arising in sparsity regularization. The minimization of the objective function is decomposed into solving two individuals proximal mapping. In addition, for many sparsity regularization, such as indictor function and $\| \cdot \|_0$, their proximal mappings have explicit solutions. For applying DYS method to these nonconvex problems, we first need to establish the corresponding convergence theory. For model \eqref{model} where all the three functions are possibly nonconvex, Liu and Yin \cite{LY} introduced an envelope function for DYS and showed that the global minimizers, local minimizers, critical (stationary) points, and strict saddle points of the envelope function correspond one on one to those of the objective function in model \eqref{model} under smoothness conditions for $F$ and $H$. However, there are no available convergence results for DYS in the non-convex case in paper \cite{LY}. In this paper, we will construct a new energy function to study the convergence of Davis-Yin splitting in the nonconvex setting. 

\begin{algorithm}[htp]
\caption{ Davis-Yin Splitting Algorithm}
\label{alg:DYS}
\begin{algorithmic}
\STATE{{\bf{Step 0.}}  Choose a step-size $\gamma >0$ and an initial point $x^{0}.$}

\STATE{{\bf{Step 1.}}  Set
\begin{subequations}\label{algalg}
\begin{align}
&y^{t+1} \in  \arg\min_{y}  \Bigg\{  F(y) + \frac{1}{2\gamma}\|y-x^{t}\|^{2} \Bigg\}, \label{algy}\\
&z^{t+1} \in  \arg\min_{z} \Bigg\{ G(z) + \frac{1}{2\gamma}\|z - ( 2y^{t+1} - \gamma \nabla H(y^{t+1}) - x^{t} ) \|^{2} \Bigg\},  \label{algz} \\
&x^{t+1} = x^{t} + (z^{t+1} - y^{t+1}). \label{algx}
\end{align}
\end{subequations}
}
\STATE{{\bf{Step 2.}}  If a termination criterion is not met, go to Step 1.}

\end{algorithmic}
\end{algorithm}

We present the form of DYS in the nonconvex case in \cref{alg:DYS}. In general, subproblems \eqref{algy} and \eqref{algz} are simpler to solve, so DYS method decompose a difficult optimization problem into simpler subproblems. At the same time, we can see two special cases from the DYS algorithm:

 (i) When the function $H(x)$ in model \eqref{model} is equal to 0, the DYS algorithm becomes the classical DRS algorithm; 
 
 (ii) When the function $F(x)$ in the model \eqref{model} is equal to 0, the DYS algorithm becomes a another very popular algorithm, namely forward-backward splitting algorithm (FBS). 
 
 Therefore, the DYS algorithm is an extension of these two classical algorithms, but as as mentioned before, the DYS algorithm still lacks certain theoretical analysis and applications in the nonconvex setting. This will be the main content of this paper. In summary, the contributions to this article are as follows:

1. We show that when the step size in the algorithm \eqref{alg:DYS} is less than some computable threshold, any cluster point of the sequence generated by algorithm \eqref{alg:DYS} is a stationary point of model \eqref{model}. We achieve this by revealing that the sequence is decreasing along a new energy function associated with the DYS method.

2. We establish the global convergence of the sequence generated by DYS method when the energy function meets Kurdyka-\L ojasiewicz (KL) conditions. We also give some sufficient conditions to guarantee the boundedness of the sequence generated by DYS method. Furthermore, we prove a local convergence rate of DYS method when the energy function is a KL function.

3. We resolve sparse signal recovery and low rank matrix recovery problems by DYS method, and the experiments results indicate that DYS method outperforms the exsiting methods for these specific applications. Especially for the low rank matrix recovery problem, DYS method clearly shows its advantages on the computation speed and the accuracy of the solution compared to some classical methods.

The rest of this paper is organized as follows. We present some notation and preliminaries in \cref{sec:notation}. We study the convergence behavior of DYS algorithm for a class of nonconvex and nonsmooth model \eqref{model} in \cref{sec:convergence}. In \cref{sec:examples}, we carry out some experiments with DYS algorithm, and the numerical results show that this algorithm is very efficient. In \cref{sec:conclude}, we give some concluding remarks.
\section{Notation and preliminaries}
\label{sec:notation}
In this paper, we use $\mathbb{R}^{n}$ to denote the $n$-dimensional Euclidean space, $ \left \langle \cdot, \cdot \right \rangle$ to denote the inner product and $\| \cdot \| = \sqrt{\left \langle \cdot, \cdot \right \rangle}$ to denote the norm induced by the inner product. For an extended-real-valued function $f : \mathbb{R}^{n} \to (-\infty, \infty]$, $f$ is said to be proper if it is never $-\infty$ and its domain, dom $f := \{x \in \mathbb{R}^{n} : f(x) < +\infty\}$ is nonempty. The function is called closed if it is proper and lower semicontinuous. 

 For a  proper function $f$, the limiting {\it subdifferential} of $f$ at $x \in $ dom $f$ is defined by
\begin{equation}\label{subdiff}
\aligned
\partial f(x) :=  \Big\{  &  v \in \mathbb{R}^{n} : \exists x^{t} \to x, f(x^{t}) \to f(x), v^{t} \to v ~~ \textmd{with} \\
& \liminf_{z \to x^t} \frac{f(z) - f(x^t) -  \left \langle v^t, z - x^t \right \rangle}{\| z - x^t \|} \geq 0 ~ \textmd{for} ~ \textmd{each} ~ t       \Big\}. 
\endaligned
\end{equation}  
From the above definition, we can clearly see that if $f$ is differentiable at $x$, then we have $\partial f(x) = \{ \nabla f(x) \}$. If $f$ is convex, then we have 
\begin{equation}\label{condiff}
\partial f(x) = \Big\{ v \in \mathbb{R}^{n} : f(z) \geq f(x) + \left \langle v, z-x \right \rangle~\textmd{for}~\textmd{any}~ z \in \mathbb{R}^{n} \Big\},
\end{equation}
which is the classical definition of subdifferential in convex analysis. Moreover, the inclusion property in the following
\begin{equation}\label{subdiffproperty}
\Big\{  v \in \mathbb{R}^{n} :  \exists x^{t} \to x, f(x^{t}) \to f(x), v^{t} \to v, v^{t} \in \partial f(x^{t})\Big\}\subseteq \partial f(x)
\end{equation}
holds for each $x \in \mathbb{R}^{n}$. A point $x^{*}$ is a stationary point of a function $f$ if $0 \in \partial f(x^{*})$. $x^{*}$ is a critical point of $f$ if $f$ is differentiable at $x^{*}$ and $\nabla f(x^{*}) = 0$. A function is called to be coercive if $\liminf_{\| x \| \to \infty} f(x) = \infty$. We say that $f$ is a strongly convex function with modulus $\sigma > 0$ if $f - \frac{\sigma}{2} \| \cdot \|^{2}$ is a convex function.

For any $\gamma >0$, the proximal mapping of $f$ is defined by 
\begin{equation}\label{def-pro}
P_{\gamma f} (x): x \rightarrow {\arg\min}_{y\in\mathbb{R}^n} \Big\{ f(y) + \frac{1}{2\gamma} \| y - x\|^2 \Big\}, 
\end{equation}
assuming that the $\arg \min$ exists,  where $\rightarrow$  means a possibly set-valued mapping. And for a closed set $S \subseteq \mathbb{R}^{n}$, its indicator function $\delta_{S}$ is defined by
\begin{equation}\label{def-indi}
\delta_S(x)= 
\begin{cases}
 0, ~~~~ & \textmd{if} ~ x\in S,\\
+\infty, ~~~~ & \textmd{if} ~ x\notin S. 
\end{cases}
\end{equation} 
Next, we recall some definitions related to KL function which plays an essential role in our global convergence analysis. 
\begin{definition}\label{Sem-201} $($real semialgebraic set$)$. 
A semi-algebraic set $S \subseteq \mathbb{R}^{n}$ is a finite union of sets of the form
\begin{equation}\label{semialg}
\Big\{ x \in \mathbb{R}^{n} : h_{1} (x) = \cdots h_{k} (x) = 0, ~g_{1} (x) < 0, \dots , g_{l} (x) < 0 \Big\},
\end{equation}
where $g_{1}, \dots, g_{l}$ and $h_{1}, \dots, h_{k}$ are real polynomials. 
\end{definition} 
\begin{definition}\label{Sem-202} $($real semialgebraic function$)$. 
A function $f : \mathbb{R}^{n} \to \mathbb{R}$ is semi-algebraic if the set $\big\{ (x,~f(x)) \in \mathbb{R}^{n+1} : x \in \mathbb{R}^{n} \big\}$ is semi-algebraic.
\end{definition} 

Remark that the semi-algebraic sets and semi-algebraic functions can be easily identified and contain a large number of possibly nonconvex functions arising in applications, such as see \cite{ABRS,ABS,BDL}. We also need the following KL property which holds in particular for semi-algebraic functions. 
\begin{definition}\label{KL}$($KL property and KL function$)$.
The function $F: \mathbb{R}^{n} \to \mathbb{R} \cup \{\infty\}$ has the Kurdyka-\L ojasiewicz property at $x^{*} \in$ dom $\partial F$ if there exist $\eta \in (0, \infty]$, a
neighborhood $U$ of $x^{*}$, and a continuous concave function $\varphi : [0, \eta) \to \mathbb{R}_{+}$ such that:

\begin{itemize}
\item [(i)]  $\varphi(0) = 0,~\varphi \in C^{1}((0, \eta))$, and  $\varphi^{'}(s) > 0$ for all $s \in (0, \eta)$;
\item[(ii)] for all $x \in U \cap [F(x^{*}) < F < F(x^{*})+ \eta]$ the Kurdyka-\L ojasiewicz inequality holds, i.e.,
$$ 
\varphi^{'}(F(x) - F(x^{*}))dist(0, \partial F(x)) \geq 1.
$$
\end{itemize}
If the function $F$ satisfies the Kurdyka-\L ojasiewicz property  at each point of dom $\partial F$, it is called a KL function.
\end{definition}
\begin{remark}
It follows from \cite{ABRS} that a proper closed semi-algebraic function always satisfies the KL property.
\end{remark}

\section{Convergence analysis}
\label{sec:convergence}

In this section, we analyze the convergence when \cref{alg:DYS} is applied to model \eqref{model}. For convenience, we give the corresponding first-order optimality conditions for the subproblems in \cref{alg:DYS} as follows, which will be used frequently in the convergence analysis.
\begin{subequations}
\begin{align}
& 0 \in \nabla F(y^{t+1}) + \frac{1}{\gamma} (  y^{t+1} - x^{t}), \label{opti-1}\\
& 0 \in \partial G(z^{t+1}) + \frac{1}{\gamma} (z^{t+1} + \gamma \nabla H(y^{t+1})  - 2y^{t+1} + x^{t}). \label{opti-2}
\end{align}
\end{subequations}

We will analyze \cref{alg:DYS} under the following assumptions.
\begin{assumption}\label{ass1} Functions $F,~G~and~H$ satisfy
\begin{itemize}
\item [(a1)] The function $F$ has a Lipschitz continuous gradient, i.e, there exists a constant $L  > 0$ such that
\begin{equation}\label{flip}
\| \nabla F(y_1) - \nabla F(y_2) \| \leq L \| y_1 - y_2 \|, ~~~\forall  y_1, y_2 \in \mathbb{R}^{n};
\end{equation}
\item[(a2)] $G$ is a proper closed function with a nonempty mapping $P_{\gamma G}(x)$ for any $x$ and for $\gamma > 0$;
\item[(a3)]The function $H$ has a Lipschitz continuous gradient, i.e, there exists a constant $\beta > 0$ such that
\begin{equation}\label{hlip}
\| \nabla H(y_1) - \nabla H(y_2) \| \leq \beta \| y_1 - y_2 \|, ~~~\forall y_1, y_2 \in \mathbb{R}^{n}.
\end{equation}
\end{itemize}
\end{assumption}

\begin{remark}\label{rem-ass}About the above assumptions, we can notice that
\begin{itemize}
\item[1.] When the function $H = 0$, \cref{alg:DYS} is the classical DRS algorithm. So far as we known, for DRS method in the nonconvex setting, the smoothness assumption about function $F$ has been essential. Therefore, in this case, our assumption is the same as ones in \cite{LP}. We also note that in \cite{LPADMM}, similar smoothness assumption on $F$ is also required  for the convergence analysis of ADMM algorithm in nonconvex case.
\item[2.] When the function $F = 0$, this algorithm becomes the classical Forward-Backward splitting algorithm. From \cref{alg:DYS}, we can see that the smoothness assumption about $H$ is indispensable.
\item[3.] If $F$ has a Lipschitz continuous gradient, then we can always find $l \in \mathbb{R}$ such that $F + \frac{l}{2}\| \cdot \|^{2}$ is convex, in particular, $l$ can be taken to be $L$.
\end{itemize} 
\end{remark}

Next, we start to establish the convergence, which will make use of the following energy function associated with  \cref{alg:DYS}:
\begin{equation}\label{meritfun2}
\begin{aligned}
&\Theta_{\gamma} (x, y, z) = F(y) + G(z) +H(y) + \frac{1}{2\gamma}\| 2y - z - x - \gamma \nabla H(y) \|^2 \\
&~~~~~~~~~~~~~~~~~~ - \frac{1}{2\gamma}\| x - y + \gamma \nabla H(y) \|^2 - \frac{1}{\gamma}\| y - z \|^2. 
\end{aligned}
\end{equation}
Remark that, the energy function $\Theta_{\gamma}$ is exactly the objective function  that needs to be minimized when $y=z$, which will be proved  for the limit of the sequence $\{(x^t, y^t, z^t) \}$ in \cref{clus-point}.

The following lemma states that the energy function $\Theta_{\gamma}$ decreases along the sequence generated by \cref{alg:DYS} when the step size is less than a computable threshold.
\begin{lemma}\label{decrease}
Suppose functions $F(x)$, $G(x)$ and $H(x)$ satisfy \cref{ass1}. Let $\{ (x^{t}, y^{t}, $ $z^{t}) \}$ be a sequence generated by \cref{alg:DYS}. Then for all $t \geq 1$, we have
\begin{equation}\label{de-ineq}
\Theta_{\gamma}(x^{t+1}, y^{t+1}, z^{t+1}) - \Theta_{\gamma}(x^{t}, y^{t}, z^{t}) \leq -\Lambda(\gamma) \| y^{t+1} - y^{t} \|^{2},
\end{equation}
where 
\begin{equation}\label{cond-lam}
\Lambda(\gamma) := \frac{1}{2} \left( \frac{1}{\gamma} - l \right) - \beta - (\frac{1}{\gamma} + \frac{\beta}{2}) [( -1 + 2\gamma l ) + (1 + \gamma L)^2 ].
\end{equation}
Furthermore, if the parameter $\gamma > 0$ is chosen so that $ \Lambda(\gamma) > 0 $, then the sequence $\{  \Theta_{\gamma} (x^t, y^t, z^t)\}$ is nonincreasing. 
\end{lemma}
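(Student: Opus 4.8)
The plan is to first show that the energy function \eqref{meritfun2}, complicated as it looks, collapses once evaluated along the iterates. From \eqref{opti-1} and \eqref{algx} one gets $x^{t-1}=y^{t}+\gamma\nabla F(y^{t})$ and $z^{t}-y^{t}=x^{t}-x^{t-1}$ for every $t\ge 1$; substituting these into \eqref{meritfun2} turns every quadratic term into a quadratic in $y^{t}-z^{t}$ and $\nabla F(y^{t})+\nabla H(y^{t})$, and after cancellation one obtains
\[
\Theta_{\gamma}(x^{t},y^{t},z^{t})=F(y^{t})+G(z^{t})+H(y^{t})+\tfrac{1}{2\gamma}\|y^{t}-z^{t}\|^{2}-\big\langle\nabla F(y^{t})+\nabla H(y^{t}),\,y^{t}-z^{t}\big\rangle ,\qquad t\ge 1 .
\]
In particular $\Theta_{\gamma}$ reduces to $F+G+H$ when $y^{t}=z^{t}$, matching the remark preceding the lemma. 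The same identities also give $z^{t}-y^{t}=x^{t}-x^{t-1}=(y^{t+1}-y^{t})+\gamma(\nabla F(y^{t+1})-\nabla F(y^{t}))$, so by \eqref{flip} $\|y^{t}-z^{t}\|\le(1+\gamma L)\|y^{t+1}-y^{t}\|$; this is what ultimately lets us bound everything by a multiple of $\|y^{t+1}-y^{t}\|^{2}$.

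Next I would estimate $\Theta_{\gamma}(x^{t+1},y^{t+1},z^{t+1})-\Theta_{\gamma}(x^{t},y^{t},z^{t})$ from this collapsed formula. Writing $s=F+H$, $u^{t}=y^{t}-\gamma\nabla s(y^{t})$ and $c^{t}=s(y^{t})-\tfrac{\gamma}{2}\|\nabla s(y^{t})\|^{2}$, one checks (using $x^{t-1}=y^{t}+\gamma\nabla F(y^{t})$ in \eqref{algz}) that $z^{t}\in P_{\gamma G}(u^{t})$ and that the collapsed $\Theta_{\gamma}$ equals $c^{t}+G(z^{t})+\tfrac{1}{2\gamma}\|z^{t}-u^{t}\|^{2}$. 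The nonconvexity of $G$ is absorbed here: $G$ is only proper and closed, so there is no subgradient inequality, but $z^{t+1}$ is a \emph{global} minimizer of \eqref{algz}, hence $G(z^{t+1})-G(z^{t})\le\tfrac{1}{2\gamma}\|z^{t}-u^{t+1}\|^{2}-\tfrac{1}{2\gamma}\|z^{t+1}-u^{t+1}\|^{2}$. Substituting this, the $\|z^{t+1}-u^{t+1}\|^{2}$ and $G(z^{t})$ terms cancel and one is left with
\[
\Theta_{\gamma}(x^{t+1},y^{t+1},z^{t+1})-\Theta_{\gamma}(x^{t},y^{t},z^{t})\le (c^{t+1}-c^{t})+\tfrac{1}{2\gamma}\big(\|z^{t}-u^{t+1}\|^{2}-\|z^{t}-u^{t}\|^{2}\big),
\]
an expression involving only $y^{t+1},y^{t},z^{t}$ and the gradients of $F,H$ at $y^{t+1},y^{t}$.

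Then it is estimation of this right-hand side. The ingredients are: (i) the bound $F(y^{t+1})-F(y^{t})\le\langle\nabla F(y^{t+1}),y^{t+1}-y^{t}\rangle+\tfrac{l}{2}\|y^{t+1}-y^{t}\|^{2}$, which follows from the minimality of $y^{t+1}$ in \eqref{algy}, convexity of $F+\tfrac{l}{2}\|\cdot\|^{2}$ (\cref{rem-ass}) and \eqref{opti-1}; (ii) the descent lemma $H(y^{t+1})-H(y^{t})\le\langle\nabla H(y^{t+1}),y^{t+1}-y^{t}\rangle+\tfrac{\beta}{2}\|y^{t+1}-y^{t}\|^{2}$ from \eqref{hlip}; (iii) Cauchy--Schwarz together with $\|\nabla F(y^{t+1})-\nabla F(y^{t})\|\le L\|y^{t+1}-y^{t}\|$, $\|\nabla H(y^{t+1})-\nabla H(y^{t})\|\le\beta\|y^{t+1}-y^{t}\|$, the inequality $\langle\nabla F(y^{t+1})-\nabla F(y^{t}),y^{t+1}-y^{t}\rangle\ge -l\|y^{t+1}-y^{t}\|^{2}$, and the identity/estimate for $z^{t}-y^{t}$ from the first step. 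Collecting coefficients yields the decrease \eqref{de-ineq}, with $\Lambda(\gamma)$ a computable expression in $\gamma,l,L,\beta$ as in \eqref{cond-lam} (its precise form depending on how one groups and how sharply one estimates these terms). The last assertion is then immediate: if $\Lambda(\gamma)>0$, the right-hand side of \eqref{de-ineq} is $\le 0$, so $\{\Theta_{\gamma}(x^{t},y^{t},z^{t})\}_{t\ge1}$ is nonincreasing.

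The hard part is the second step: finding the reformulation of $\Theta_{\gamma}$ along the sequence in which $G(z^{t+1})$ cancels using nothing beyond the global minimality of the proximal step --- the only structural information available about the nonconvex $G$ --- and choosing the auxiliary quantities $u^{t},c^{t}$ that make the cancellation visible. Once this is set up, the first step and the coefficient collection in the third step are long but mechanical. One should also keep in mind that the algorithm generates $(x^{t},y^{t},z^{t})$ only for $t\ge 1$, so the identity $x^{t-1}=y^{t}+\gamma\nabla F(y^{t})$, and hence the whole argument, is valid exactly on the range $t\ge 1$ appearing in \eqref{de-ineq}.
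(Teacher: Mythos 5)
Your proposal is correct in substance but organized differently from the paper's proof, and the comparison is instructive. You first collapse $\Theta_{\gamma}$ along the iterates, using $x^{t-1}=y^{t}+\gamma\nabla F(y^{t})$ (from \eqref{opti-1}) and $z^{t}-y^{t}=x^{t}-x^{t-1}$ (from \eqref{algx}), into the envelope-type form $c^{t}+G(z^{t})+\tfrac{1}{2\gamma}\|z^{t}-u^{t}\|^{2}$ with $z^{t}\in P_{\gamma G}(u^{t})$; I verified these identities, and they also explain the remark that $\Theta_{\gamma}$ reduces to $F+G+H$ when $y=z$. Only then do you compare consecutive iterates, absorbing the nonconvex $G$ exactly as the paper does, through the global minimality of the prox step (the analogue of \eqref{eq202}). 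The paper instead works with the raw variables $(x,y,z)$: it uses the strong convexity (modulus $\tfrac{1}{\gamma}-l$) of the $y$-subproblem to get \eqref{eq201}, the algebraic identity of \cref{eq-rela} to reach \eqref{eq200}, the descent lemma for $H$ in \eqref{eq209}, and the monotonicity of $\nabla\bigl(F+\tfrac{l}{2}\|\cdot\|^{2}\bigr)$ together with \cref{x-bounded} to bound $\|y^{t+1}-z^{t}\|^{2}$ in \eqref{eq213}. Your ingredients (weak-convexity upper bound for $F$, descent lemma for $H$, the inequality $\langle\nabla F(y^{t+1})-\nabla F(y^{t}),y^{t+1}-y^{t}\rangle\geq -l\|y^{t+1}-y^{t}\|^{2}$, and the identity $z^{t}-y^{t}=(y^{t+1}-y^{t})+\gamma(\nabla F(y^{t+1})-\nabla F(y^{t}))$) are the same tools in a cleaner packaging, and carrying out your bookkeeping does produce a sufficient-decrease inequality of the form \eqref{de-ineq}. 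The one caveat concerns the constant: your grouping naturally yields a threshold of the form $\tfrac{1}{2\gamma}-\tfrac{3l+\beta}{2}-(L+\beta)(1+\gamma L)$, which is not the expression \eqref{cond-lam} and does not dominate it in general (for instance when $\beta$ is large relative to $l+L$ the paper's $\Lambda(\gamma)$ is the larger one), so as written you prove the lemma with a different computable constant rather than with the stated $\Lambda(\gamma)$. Since all downstream results only need some positive decrease constant for $\gamma$ below a computable threshold, this is immaterial for the paper's program; but to obtain \eqref{de-ineq} verbatim you would have to mimic the paper's grouping, i.e.\ keep the term $-\tfrac12\bigl(\tfrac{1}{\gamma}-l\bigr)\|y^{t+1}-y^{t}\|^{2}$ coming from the $y$-step and the combination $\bigl(\tfrac{1}{\gamma}+\tfrac{\beta}{2}\bigr)\|y^{t+1}-z^{t}\|^{2}$ estimated as in \eqref{eq213}.
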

\begin{remark}\label{rem-fun}When $F(x) = 0$, from \cref{alg:DYS} we have the variable $x^t = y^t = z^t$ , so the
energy function $\Theta_{\gamma}$ here is consistent with the decreasing function of Forward-Backward splitting in \cite{ABS}. When $H(x) = 0$, we also see that the energy function $\Theta_{\gamma}$ is the same as the merit function of DRS method in \cite{LP}. We also remark that we have $\Lambda(\gamma) \to +\infty$ when $\gamma \to 0$. Therefore, given $l \in \mathbb{R}$ and $L, \beta > 0$, $\Lambda(\gamma) > 0$ always holds if $\gamma >0$ is sufficiently small.
\end{remark}

Next we will formulate general conditions in terms of the input data of problem \eqref{model} which guarantee the boundedness of the sequence $\{(x^t, y^t, z^t) \}$ generated by \cref{alg:DYS}.
\begin{theorem}\label{bounded}
Let \cref{ass1} be satisfied and let the parameter $\gamma$ in \cref{alg:DYS} be such that $\Lambda(\gamma) > 0$. Suppose that the functions $F$, $G$ and $H$ are both bounded below and one of which is coercive. Then every sequence $\{(x^t, y^t, z^t)\}$ gernerated by \cref{alg:DYS} is bounded.
\end{theorem}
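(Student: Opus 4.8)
The plan is to trap the whole iterate triple in a bounded sublevel set of the energy function $\Theta_{\gamma}$ and then invoke coercivity. By \cref{decrease} and the hypothesis $\Lambda(\gamma)>0$, the sequence $\{\Theta_{\gamma}(x^{t},y^{t},z^{t})\}_{t\ge1}$ is nonincreasing, so $\Theta_{\gamma}(x^{t},y^{t},z^{t})\le\Theta_{\gamma}(x^{1},y^{1},z^{1})=:M<\infty$ for every $t\ge1$ (the value $M$ is finite because $F$ and $H$ are everywhere finite-valued and $z^{1}$, being a proximal point of $G$, lies in the domain of $G$). It then remains to bound $\Theta_{\gamma}(x^{t},y^{t},z^{t})$ \emph{from below} by a coercive function of the iterates.

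The key step is an exact rewriting of $\Theta_{\gamma}$ along the iterates. Optimality condition \eqref{opti-1} gives $x^{t-1}=y^{t}+\gamma\nabla F(y^{t})$, which combined with update \eqref{algx} yields the identity $x^{t}=z^{t}+\gamma\nabla F(y^{t})$ for all $t\ge1$. Substituting $x=z+\gamma\nabla F(y)$ into \eqref{meritfun2} and expanding the three squared norms, the terms involving $\|\nabla F(y)+\nabla H(y)\|^{2}$ cancel and one is left with
\[
\Theta_{\gamma}(x^{t},y^{t},z^{t})=F(y^{t})+G(z^{t})+H(y^{t})+\frac{1}{2\gamma}\|y^{t}-z^{t}\|^{2}-\langle y^{t}-z^{t},\,\nabla F(y^{t})+\nabla H(y^{t})\rangle .
\]
Applying the descent lemma furnished by \cref{ass1} (Lipschitz continuity of $\nabla F$ with constant $L$, of $\nabla H$ with constant $\beta$) at $u=y^{t}$, $v=z^{t}$ gives $F(y^{t})-\langle y^{t}-z^{t},\nabla F(y^{t})\rangle\ge F(z^{t})-\tfrac{L}{2}\|y^{t}-z^{t}\|^{2}$ and the analogous bound for $H$, hence
\[
\Theta_{\gamma}(x^{t},y^{t},z^{t})\ \ge\ (F+G+H)(z^{t})+\Big(\tfrac{1}{2\gamma}-\tfrac{L+\beta}{2}\Big)\|y^{t}-z^{t}\|^{2}.
\]
Taking $l=L$ as permitted by \cref{rem-ass}, the bracketed factor in \eqref{cond-lam} equals $2\gamma(l+L)+\gamma^{2}L^{2}>0$, so $\Lambda(\gamma)<\tfrac12(\tfrac1\gamma-l)-\beta$, and $\Lambda(\gamma)>0$ then forces $\tfrac1{2\gamma}>\tfrac L2+\beta>\tfrac{L+\beta}{2}$; thus the coefficient of $\|y^{t}-z^{t}\|^{2}$ above is strictly positive.

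Combining the upper and lower bounds yields $(F+G+H)(z^{t})\le M$ and $\big(\tfrac1{2\gamma}-\tfrac{L+\beta}{2}\big)\|y^{t}-z^{t}\|^{2}\le M-\inf(F+G+H)$ for all $t\ge1$, the latter right-hand side being a finite constant since $F,G,H$ are bounded below. Because one of $F,G,H$ is coercive while the other two are bounded below, $F+G+H$ is coercive; hence $\{z^{t}\}$ is confined to a bounded sublevel set of $F+G+H$, the second inequality bounds $\{y^{t}-z^{t}\}$ and therefore $\{y^{t}\}$, and finally $x^{t}=z^{t}+\gamma\nabla F(y^{t})$ together with the continuity of $\nabla F$ bounds $\{x^{t}\}$, which proves the claim.

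I expect the main difficulty to be the rewriting step in the second paragraph: recognizing that, evaluated on the iterates, $\Theta_{\gamma}$ collapses to $(F+G+H)(z^{t})$ plus a sign-definite multiple of $\|y^{t}-z^{t}\|^{2}$. This hinges on the identity $x^{t}=z^{t}+\gamma\nabla F(y^{t})$ and on applying the Lipschitz-gradient inequality in the direction that trades the linear terms $\langle y^{t}-z^{t},\nabla F(y^{t})\rangle$ and $\langle y^{t}-z^{t},\nabla H(y^{t})\rangle$ for the function values $F(z^{t})$ and $H(z^{t})$; the positivity of the leftover coefficient is precisely what the threshold $\Lambda(\gamma)>0$ buys. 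Everything else --- the descent estimate of \cref{decrease} and the passage from ``one summand coercive, the others bounded below'' to ``$F+G+H$ coercive'' --- is routine.
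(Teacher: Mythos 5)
Your argument is correct in outline and takes a genuinely different route from the paper. You collapse $\Theta_{\gamma}$ on the iterates via the identity $x^{t}=z^{t}+\gamma\nabla F(y^{t})$ (a consequence of \eqref{opti-1} and \eqref{algx}, and your algebraic reduction of \eqref{meritfun2} under this substitution checks out), then trade the linear terms for $F(z^{t})$, $H(z^{t})$ by the descent lemma, obtaining $\Theta_{\gamma}(x^t,y^t,z^t)\ge (F+G+H)(z^{t})+\bigl(\tfrac{1}{2\gamma}-\tfrac{L+\beta}{2}\bigr)\|y^{t}-z^{t}\|^{2}$, and finish with coercivity of the sum $F+G+H$. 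The paper instead lower-bounds $\Theta_{\gamma}$ keeping $F$ and $H$ evaluated at $y^{t}$ and $G$ at $z^{t}$: it uses the inequalities \eqref{ineq10}--\eqref{ineq11} (boundedness below of $F(x)-\tfrac{1}{2L}\|\nabla F(x)\|^2$ and $H(x)-\tfrac{1}{2\beta}\|\nabla H(x)\|^2$), the relation $\|x^{t-1}-y^{t}\|=\gamma\|\nabla F(y^{t})\|$ from \eqref{ineq13}, and Cauchy--Schwarz, arriving at \eqref{ineq14} with positive coefficients in front of $\|\nabla F(y^t)\|^2$, $\|\nabla H(y^t)\|^2$, $\|x^t-y^t\|^2$, and then splits into two cases according to which of $G$ or $F,H$ is coercive. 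Your single sublevel-set argument at $z^{t}$ avoids that case analysis and is cleaner; note also that the paper's own proof quietly imposes extra smallness conditions on $\gamma$ (``choose $\gamma>0$ small and $\mu,\nu\in(0,1)$ such that \dots'') beyond $\Lambda(\gamma)>0$, so your attempt to deduce the needed smallness directly from $\Lambda(\gamma)>0$ is, if anything, more faithful to the statement.

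The one step you should tighten is the positivity of $\tfrac{1}{2\gamma}-\tfrac{L+\beta}{2}$. You ``take $l=L$'' and then drop the whole bracketed term in \eqref{cond-lam}; but $\Lambda(\gamma)$ is strictly decreasing in $l$, and the hypothesis $\Lambda(\gamma)>0$ is stated for whatever admissible $l$ was fixed in \cref{decrease} (the paper itself uses $l=0$ in its experiments), so positivity of $\Lambda$ with that $l$ does not imply positivity with $l=L$, and your chain with a smaller $l$ only yields $\tfrac{1}{2\gamma}>\tfrac{l}{2}+\beta$, which is weaker than what you need. The conclusion is nevertheless true for every admissible $l\in[-L,L]$: keep the $\gamma^{2}L^{2}$ part of the bracket instead of discarding it, so that $\Lambda(\gamma)\le \tfrac12\bigl(\tfrac1\gamma+L\bigr)-\beta-\bigl(\tfrac1\gamma+\tfrac\beta2\bigr)\gamma^{2}L^{2}$; the right-hand side, as a function of $u=1/\gamma$, is increasing and equals $-\tfrac{\beta^{3}}{2(L+\beta)^{2}}<0$ at $u=L+\beta$, so $\Lambda(\gamma)>0$ forces $1/\gamma>L+\beta$, which is exactly the positivity you use. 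With that repair your proof is complete.
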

Proofs of \cref{decrease} and \cref{bounded} are given in \cref{app1}.\\

We now proceed to prove the first global convergence result for the \cref{alg:DYS}, which also gives the properties of the cluster point of sequence generated by \cref{alg:DYS}.
\begin{theorem}\label{clus-point}
(Global subsequential convergence). Let \cref{ass1} be satisfied and let the parameter $\gamma$ in \cref{alg:DYS} be such that $\Lambda(\gamma) > 0$. Then we have\\
(i) \begin{equation}\label{ineq15}
\lim_{t \to \infty} \| y^{t+1} - y^{t} \| = \lim_{t \to \infty} \| x^{t+1} - x^t \| = \lim_{t \to \infty} \| z^{t+1} - y^{t+1} \| = 0;
\end{equation}
(ii) Any cluster point $( x^*, y^*, z^*)$ of sequence $\{ (x^t, y^t, z^t) \} $ generated by \cref{alg:DYS} satisfies:
\begin{equation}\label{sta-point}
0 \in \nabla F(y^*) + \partial G(y^*) + \nabla H(y^*).
\end{equation}
\end{theorem}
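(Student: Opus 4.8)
The plan is to deduce both parts from the descent inequality in \cref{decrease} together with the first-order optimality conditions \eqref{opti-1}--\eqref{opti-2}. For part (i), since $\Lambda(\gamma) > 0$ and $\Theta_\gamma$ decreases along the iterates, the sequence $\{\Theta_\gamma(x^t,y^t,z^t)\}$ is nonincreasing; I would first argue it is also bounded below. This requires rewriting $\Theta_\gamma$ in a more transparent form: using \eqref{opti-1}, namely $x^t - y^{t+1} = \gamma\nabla F(y^{t+1})$, and completing squares, one can express $\Theta_\gamma(x^{t+1},y^{t+1},z^{t+1})$ so that the quadratic terms combine with $F(y^{t+1}) + G(z^{t+1}) + H(y^{t+1})$ to something bounded below (for instance, after regrouping, the value dominates $F(y^{t+1}) + G(z^{t+1}) + H(y^{t+1})$ up to controlled quadratic corrections, which is bounded below under \cref{ass1} in the cases of interest; more carefully one shows $\Theta_\gamma$ is bounded below whenever the descent holds, e.g. by bounding it below by the objective evaluated along the iterates minus vanishing terms). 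Granting boundedness below, the monotone sequence $\{\Theta_\gamma(x^t,y^t,z^t)\}$ converges, so telescoping \eqref{de-ineq} over $t$ gives $\sum_{t\ge 1}\Lambda(\gamma)\|y^{t+1}-y^t\|^2 < \infty$, whence $\|y^{t+1}-y^t\|\to 0$. Then $\|x^{t+1}-x^t\| = \|z^{t+1}-y^{t+1}\|$ by \eqref{algx}, so it remains to show $\|z^{t+1}-y^{t+1}\|\to 0$; subtracting \eqref{algx} at consecutive indices and using the relation between $x$, $y$, $z$ one bounds $\|z^{t+1}-y^{t+1}\|$ in terms of $\|y^{t+1}-y^t\|$, $\|x^t-x^{t-1}\|$, and Lipschitz terms, and an induction / summability argument closes it.

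For part (ii), let $(x^*,y^*,z^*)$ be the limit of a subsequence $(x^{t_k},y^{t_k},z^{t_k})$. From part (i), $\|z^{t_k}-y^{t_k}\|\to 0$ forces $z^* = y^*$, and similarly along the subsequence $y^{t_k+1}\to y^*$, $z^{t_k+1}\to y^*$, $x^{t_k+1}\to x^*$. The optimality condition \eqref{opti-1} reads $\nabla F(y^{t_k+1}) = \tfrac{1}{\gamma}(x^{t_k}-y^{t_k+1})$; passing to the limit using continuity of $\nabla F$ gives $\nabla F(y^*) = \tfrac{1}{\gamma}(x^*-y^*)$. From \eqref{opti-2}, $v^{t_k+1} := -\tfrac{1}{\gamma}(z^{t_k+1}+\gamma\nabla H(y^{t_k+1}) - 2y^{t_k+1} + x^{t_k}) \in \partial G(z^{t_k+1})$; I would check that $v^{t_k+1}$ converges (to $v^* = \tfrac{1}{\gamma}(y^* + \gamma\nabla H(y^*)\cdot 0 \dots$ — more precisely, substituting the limits, $v^{t_k+1} \to -\tfrac{1}{\gamma}(y^* + \gamma\nabla H(y^*) - 2y^* + x^*) = \tfrac{1}{\gamma}(y^* - x^*) - \nabla H(y^*)$), and that $G(z^{t_k+1})\to G(y^*)$ — the latter needs the function value convergence $G(z^{t_k+1})\to G(z^*)$, which I would obtain from the fact that $z^{t_k+1}$ is a proximal minimizer (so $G(z^{t_k+1}) + \tfrac{1}{2\gamma}\|z^{t_k+1} - w^{t_k+1}\|^2 \le G(z^*) + \tfrac{1}{2\gamma}\|z^* - w^{t_k+1}\|^2$ with $w^{t_k+1} = 2y^{t_k+1}-\gamma\nabla H(y^{t_k+1})-x^{t_k}$) combined with lower semicontinuity of $G$. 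Then the closedness property \eqref{subdiffproperty} of the limiting subdifferential yields $v^* \in \partial G(y^*)$. Adding $\nabla F(y^*)$, $v^*$, and $\nabla H(y^*)$ the terms $\tfrac{1}{\gamma}(x^*-y^*)$ and $\tfrac{1}{\gamma}(y^*-x^*)$ cancel, giving $0 \in \nabla F(y^*) + \partial G(y^*) + \nabla H(y^*)$, which is \eqref{sta-point}; the remark about $\Theta_\gamma$ equalling the objective at the limit follows since $y^*=z^*$ makes the extra quadratic terms in \eqref{meritfun2} telescope to $0$.

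The main obstacle I anticipate is the lower boundedness of $\Theta_\gamma$ along the iterates and, relatedly, the passage to the limit of the function value $G(z^{t_k+1})$. Because $\Theta_\gamma$ is not simply the sum $F+G+H$ — it carries sign-indefinite quadratic corrections — showing it is bounded below requires carefully using \eqref{opti-1} to re-express one quadratic term and then invoking the structural hypotheses (boundedness below of $F,G,H$ as in \cref{bounded}, or at minimum that the descent forces a finite infimum). The proximal-inequality argument for $\limsup G(z^{t_k+1}) \le G(y^*)$ is the other delicate point, since $G$ is merely closed and possibly nonconvex; combined with lsc it pins down $\lim G(z^{t_k+1}) = G(y^*)$, which is exactly what \eqref{subdiffproperty} needs. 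Once these two analytic facts are in place, the rest is bookkeeping with the optimality conditions and the Lipschitz bounds.
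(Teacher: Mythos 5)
Your part (ii) is essentially the paper's own argument: the proximal inequality for $z^{t+1}$ plus lower semicontinuity gives $G(z^{t_k+1})\to G(z^*)=G(y^*)$, and then the closedness property \eqref{subdiffproperty} together with continuity of $\nabla F$, $\nabla H$ lets you pass \eqref{opti-1}--\eqref{opti-2} to the limit, with the $\tfrac{1}{\gamma}(x^*-y^*)$ terms cancelling. No issues there.

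The genuine gap is in part (i), precisely at the point you flag as the ``main obstacle.'' You propose to show that $\Theta_\gamma$ is bounded below along the iterates from \cref{ass1} alone (``bounded below under Assumption 3.1 in the cases of interest,'' or ``the descent forces a finite infimum''). This cannot work: under \cref{ass1} nothing prevents the iterates from drifting with $\Theta_\gamma(x^t,y^t,z^t)\to-\infty$ while $\|y^{t+1}-y^t\|$ stays bounded away from $0$ (take $G=H=0$ and $F$ linear; then $y^{t+1}-y^t=-\gamma\nabla F$ is a nonzero constant). Boundedness below of $F,G,H$ is the hypothesis of \cref{bounded}, not of this theorem, so invoking it here changes the statement; and ``descent forces a finite infimum'' is circular. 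The paper's proof avoids this entirely by using the cluster point itself in part (i): it telescopes \eqref{de-ineq} up to $N=t_j$ along a subsequence converging to $(x^*,y^*,z^*)$ and uses lower semicontinuity of $\Theta_\gamma$ (with $F$, $G$ proper) to bound $\liminf_j\Theta_\gamma(x^{t_j},y^{t_j},z^{t_j})\ge\Theta_\gamma(x^*,y^*,z^*)>-\infty$, which makes $\sum_t\|y^{t+1}-y^t\|^2$ finite; so part (i) is really proved under the (implicit) existence of a cluster point, consistent with how the theorem is used later. Your proposal is missing this device. A secondary, smaller point: once $\|y^{t+1}-y^t\|\to 0$, the clean route to the other two limits is the inequality $\|x^t-x^{t-1}\|\le(1+\gamma L)\|y^{t+1}-y^t\|$ (from \eqref{opti-1} and the Lipschitz continuity of $\nabla F$, i.e.\ \cref{x-bounded}) combined with $x^{t+1}-x^t=z^{t+1}-y^{t+1}$; your ``induction / summability argument'' gestures at this but should be replaced by that one-line bound.
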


Next, we will show the global convergence of the whole sequence generated by \cref{alg:DYS} under the additional assumption that the energy function $\Theta_{\gamma}$ is a KL function. In our proof, we will make use of the KL property; see \cref{KL}. This property has been used in many articles, such as \cite{LP, LLP, LPADMM, BCN, ABS, BZ}. In our analysis, we follow the similar line of these papers to prove the convergence of the sequence. 

In the following, we will show that if $\Theta_{\gamma}(x, y, z)$ is a KL function, then sequence $\{ (x^{t}, y^{t}, z^{t})\}_{t \geq 1}$ converges to a stationary point of the problem \eqref{model}. 
\begin{theorem}\label{whole-con}(Global convergence of the whole sequence)
Let \cref{ass1} be satisfied and let the parameter $\gamma$ in \cref{alg:DYS} be such that $\Lambda(\gamma) > 0$. Let $\{ (x^t, y^t, z^t) \}_{t \geq 1}$ be a sequence generated by \cref{alg:DYS} which has a cluster point. If $\Theta_{\gamma}$ is a KL function, then the following statements hold:
\begin{itemize}
\item[(i)]  The limit $\lim_{t \to \infty} \Theta_{\gamma}(x^t, y^t, z^t)$ exists and for any cluster point $(x^*, y^*, z^*)$ of the sequence $\{ (x^t, y^t, z^t) \}$ have 
\begin{equation}\label{ineq27}
\Theta^*:= \lim_{t \to \infty}\Theta_\gamma(x^t, y^t, z^t) = \Theta_\gamma (x^*, y^*, z^*);
\end{equation}
\item[(ii)] The sequence $\{ (x^t, y^t, z^t) \}_{t \geq 1}$ has finite length, that is,
\begin{equation}\label{ineq32}
\sum_{t \geq 1} \| x^{t+1} - x^t \| < +\infty;~~~~\sum_{t \geq 1} \| y^{t+1} - y^t \| < +\infty; ~~~~\sum_{t \geq 1} \| z^{t+1} - z^t \| < +\infty.
\end{equation}
Therefore, the whole sequence $\{(x^t, y^t, z^t) \}$ is convergent.
\end{itemize}
\end{theorem}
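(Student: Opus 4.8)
The plan is to run the by-now-standard three-step Kurdyka--\L ojasiewicz scheme (sufficient decrease, relative error, and the descent/KL inequality with a neighborhood argument), specialized to the energy function $\Theta_\gamma$ and to the optimality conditions \eqref{opti-1}--\eqref{opti-2}; throughout write $w^t:=(x^t,y^t,z^t)$. For (i): since $\Lambda(\gamma)>0$, \cref{decrease} makes $\{\Theta_\gamma(w^t)\}$ nonincreasing, and it is bounded above by $\Theta_\gamma(w^1)<+\infty$; because $F,H\in C^1$ and $G$ is closed, $\Theta_\gamma$ is proper and lower semicontinuous, so along any convergent subsequence $w^{t_k}\to(x^*,y^*,z^*)$ one gets $\lim_t\Theta_\gamma(w^t)\ge\Theta_\gamma(x^*,y^*,z^*)>-\infty$; call this limit $\Theta^*$. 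For the reverse inequality I would exploit that $z^{t_k}$ solves the $z$-subproblem at step $t_k-1$, compare its objective value against the feasible vector $z^*$, and let $k\to\infty$, using \eqref{ineq15} (which forces $x^{t_k-1}\to x^*$) and the continuity of $\nabla H$; this yields $\limsup_k G(z^{t_k})\le G(z^*)$, hence (with lower semicontinuity) $G(z^{t_k})\to G(z^*)$ and therefore $\Theta_\gamma(w^{t_k})\to\Theta_\gamma(x^*,y^*,z^*)$. Thus $\Theta^*=\Theta_\gamma(x^*,y^*,z^*)$ for every cluster point, which is \eqref{ineq27} (and, since $z^*=y^*$ by \eqref{ineq15}, it also equals $F(y^*)+G(y^*)+H(y^*)$, justifying the earlier remark).

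For (ii) the two crucial estimates are the following. The sufficient-decrease estimate is \cref{decrease} itself, $\Theta_\gamma(w^t)-\Theta_\gamma(w^{t+1})\ge\Lambda(\gamma)\|y^{t+1}-y^t\|^2$. The relative-error estimate is the real work: I would exhibit an element of $\partial\Theta_\gamma(w^{t+1})$ by differentiating componentwise. The partials of $\Theta_\gamma$ in $x$ and $z$ are $\frac{1}{\gamma}(z-y)$ and, after inserting the subgradient of $G$ supplied by \eqref{opti-2}, a multiple of $y-z$; the partial in $y$, after using \eqref{opti-1} and $x^{t+1}-x^t=z^{t+1}-y^{t+1}$ from \eqref{algx}, collapses to $\big(\frac{1}{\gamma}I-\nabla^2H(y^{t+1})\big)(y^{t+1}-z^{t+1})$, whose norm is at most $\big(\frac{1}{\gamma}+\beta\big)\|y^{t+1}-z^{t+1}\|$ by (a3) (the second-order contribution of $H$ is harmless precisely because it always appears multiplied by $y-z$). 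Altogether $\mathrm{dist}(0,\partial\Theta_\gamma(w^{t+1}))\le\big(\frac{3}{\gamma}+\beta\big)\|y^{t+1}-z^{t+1}\|$; and since \eqref{opti-1} gives $x^t=y^{t+1}+\gamma\nabla F(y^{t+1})$, the $L$-Lipschitz bound on $\nabla F$ turns this into $\|y^{t+1}-z^{t+1}\|=\|x^t-x^{t+1}\|\le(1+\gamma L)\|y^{t+2}-y^{t+1}\|$, hence $\mathrm{dist}(0,\partial\Theta_\gamma(w^t))\le M\|y^{t+1}-y^t\|$ with $M:=\big(\frac{3}{\gamma}+\beta\big)(1+\gamma L)$.

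With these in hand, set $a_t:=\|y^{t+1}-y^t\|$; if the iterates are eventually constant we are done, so assume $\Theta_\gamma(w^t)>\Theta^*$ for all $t$. Fix a cluster point $w^*$ (which lies in $\mathrm{dom}\,\partial\Theta_\gamma$ by \cref{clus-point}(ii)) and its KL data $\varphi,U,\eta$; by (i), $\Theta_\gamma(w^t)\downarrow\Theta^*=\Theta_\gamma(w^*)$, so eventually $\Theta_\gamma(w^t)\in(\Theta^*,\Theta^*+\eta)$, and by the usual inductive argument (bounding $\sum\|w^{s+1}-w^s\|$ over the steps already shown to lie in $U$ by the very length estimate we are about to derive) the iterates do not leave $U$. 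Then concavity of $\varphi$ together with the KL inequality of \cref{KL} gives
\[
\varphi(\Theta_\gamma(w^t)-\Theta^*)-\varphi(\Theta_\gamma(w^{t+1})-\Theta^*)\ \ge\ \frac{\Theta_\gamma(w^t)-\Theta_\gamma(w^{t+1})}{\mathrm{dist}(0,\partial\Theta_\gamma(w^t))}\ \ge\ \frac{\Lambda(\gamma)}{M}\,a_t,
\]
and summation telescopes, so $\sum_t a_t<+\infty$. Finally $\|x^{t+1}-x^t\|=\|z^{t+1}-y^{t+1}\|\le(1+\gamma L)a_{t+1}$ and $z^{t+1}-z^t=(x^{t+1}-x^t)-(x^t-x^{t-1})+(y^{t+1}-y^t)$ give $\sum_t\|x^{t+1}-x^t\|<+\infty$ and $\sum_t\|z^{t+1}-z^t\|<+\infty$, which is \eqref{ineq32}; hence $\{w^t\}$ is Cauchy and converges, necessarily to its cluster point, which is a stationary point of \eqref{model} by \cref{clus-point}(ii).

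I expect the main obstacle to be the relative-error estimate: correctly differentiating the quadratic part of $\Theta_\gamma$ that has $\gamma\nabla H(y)$ buried inside two squared norms, checking that the second-order $H$-terms regroup into the single term $-\nabla^2H(y^{t+1})(y^{t+1}-z^{t+1})$ so that (a3) alone controls it (no smoothness of $H$ beyond $C^{1,1}$ is in fact needed, the bound coming from $\|\nabla^2H\|\le\beta$ or, when $H\notin C^2$, from a Clarke-Jacobian estimate), and then chaining \eqref{opti-1}, \eqref{opti-2}, \eqref{algx} and the $L$-Lipschitz bound to reduce everything to a constant times $\|y^{t+1}-y^t\|$. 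The secondary technical points are the customary ``iterates stay in the KL neighborhood'' bootstrap and the minor bookkeeping needed to pass from the single-variable descent of \cref{decrease} to the summability of the $x$- and $z$-increments.
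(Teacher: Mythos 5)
Your proposal is correct and takes essentially the same route as the paper: part (i) is the paper's argument (monotone decrease from \cref{decrease} plus the $z$-subproblem comparison along the subsequence giving $G(z^{t_j})\to G(z^*)$), and part (ii) re-derives the relative-error bound of \cref{subgradient}, $\mathrm{dist}\bigl(0,\partial\Theta_\gamma(x^t,y^t,z^t)\bigr)\le\tau\|y^{t+1}-y^t\|$, by the same componentwise differentiation (your explicit constant $\bigl(\tfrac{3}{\gamma}+\beta\bigr)(1+\gamma L)$ in place of the paper's $\tau$), then runs the standard KL neighborhood/induction bootstrap and transfers summability to the $x$- and $z$-increments via \cref{x-bounded} and \eqref{algx}. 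The one caveat, which you share with the paper (whose \cref{subgradient} states the extra hypothesis explicitly even though the theorem does not), is that the $y$-partial computation presumes $H$ twice continuously differentiable with bounded Hessian; your aside that a Clarke-Jacobian estimate would cover $H\notin C^2$ needs more care, since exhibiting a small Clarke subgradient does not by itself exhibit a small limiting subgradient as required by \cref{KL}.
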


Finally, we  give eventual convergence rates of the nonconvex DYS method by examining the range of the exponent.
\begin{theorem}\label{conv-rate}(Eventual convergence rate)
Let the parameter $\gamma > 0$ be chosen such that $\Lambda(\gamma) > 0$ and $\{ x^t, y^t, z^t \}$ be a sequence generated by \cref{alg:DYS}. Suppose  $\{ x^t, y^t, z^t \}$ has a cluster point $(x^*, y^*, z^*)$. Suppose in addition that $F$, $H$ and $G$ are KL functions such that the $\varphi$ in \cref{KL} has the form $\varphi(s) = c s^{1-\theta}$ for some $\theta \in [0,1)$ and $c > 0$. Then, we have
\begin{itemize}
\item[(i)] If $\theta = 0$, then there exists $t_0 \geq 1$ such that for all $t \geq t_0,$ $0 \in \nabla F(z^t) + \partial G(z^t) + \nabla H(z^t)$;
\item[(ii)] If $\theta \in (0, \frac{1}{2}]$, then there exists $\eta \in (0,1)$ and $\kappa > 0$ so that $\textmd{dist}(0, \nabla F(z^t) + \partial G(z^t) + \nabla H(z^t)) \leq \kappa \eta^t$ for all large $t$;
\item[(iii)] If $\theta \in (\frac{1}{2}, 1)$, then there exists $\kappa > 0$ such that $\textmd{dist}(0, \nabla F(z^t) + \partial G(z^t) + \nabla H(z^t)) \leq \kappa t^{-\frac{1}{4\theta - 2}}$ for all large t. 
\end{itemize}
\end{theorem}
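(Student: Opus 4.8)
The plan is to adapt the now-standard KL-exponent argument (as in Attouch--Bolte--Svaiter, Li--Pong, and the earlier DRS works cited) to the energy function $\Theta_\gamma$, but then to translate the resulting estimates on the \emph{iterates} into the stated estimates on the \emph{optimality residual} $\operatorname{dist}(0, \nabla F(z^t) + \partial G(z^t) + \nabla H(z^t))$. The first ingredient is already in hand: by \cref{decrease}, $\Theta_\gamma(x^t,y^t,z^t) - \Theta_\gamma(x^{t+1},y^{t+1},z^{t+1}) \geq \Lambda(\gamma)\|y^{t+1}-y^t\|^2$, and by \cref{clus-point} together with \cref{whole-con} the sequence converges to a stationary point $(x^*,y^*,z^*)$ with $\Theta^* = \lim_t \Theta_\gamma(x^t,y^t,z^t)$. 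The second ingredient I need is a \emph{subgradient bound}: there is a constant $b > 0$ with $\operatorname{dist}\big(0, \partial\Theta_\gamma(x^t,y^t,z^t)\big) \leq b\,\|y^{t+1}-y^t\| $ (possibly also involving $\|x^{t+1}-x^t\|$ and $\|z^{t+1}-z^t\|$, all of which are controlled by $\|y^{t+1}-y^t\|$ up to constants using the optimality conditions \eqref{opti-1}--\eqref{opti-2} and Lipschitz continuity of $\nabla F$, $\nabla H$). This is the same computation used to prove \cref{whole-con}, so I will reuse it.

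Assume first $\theta \in (0,1)$ and let $r_t := \Theta_\gamma(x^t,y^t,z^t) - \Theta^* \geq 0$. Applying the KL inequality with $\varphi(s) = c s^{1-\theta}$ at the point $(x^t,y^t,z^t)$ (which lies in the prescribed neighborhood for $t$ large, since the iterates converge to $(x^*,y^*,z^*)$) gives $c(1-\theta) r_t^{-\theta}\operatorname{dist}(0,\partial\Theta_\gamma(x^t,y^t,z^t)) \geq 1$, hence using the subgradient bound
\begin{equation}\label{plan-kl}
r_t^{\theta} \leq c(1-\theta)\, b\, \|y^{t+1}-y^t\|.
\end{equation}
Combining \eqref{plan-kl} with the descent inequality $r_t - r_{t+1} \geq \Lambda(\gamma)\|y^{t+1}-y^t\|^2 \geq \Lambda(\gamma)\,\big(c(1-\theta)b\big)^{-2} r_t^{2\theta}$ yields a one-dimensional recursion of the form $r_{t+1} \leq r_t - C r_t^{2\theta}$. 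The standard analysis of this recursion (e.g. Attouch--Bolte) gives: if $2\theta \leq 1$, i.e. $\theta \in (0,\tfrac12]$, then $r_t \leq \tilde c\,\rho^t$ for some $\rho \in (0,1)$; if $2\theta > 1$, i.e. $\theta \in (\tfrac12,1)$, then $r_t \leq \tilde c\, t^{-1/(2\theta-1)}$. Now feed these back: from \eqref{plan-kl}, $\|y^{t+1}-y^t\| \geq (c(1-\theta)b)^{-1} r_t^{\theta}$ is the wrong direction, so instead I bound the residual directly by $\operatorname{dist}(0,\partial\Theta_\gamma(x^{t+1},y^{t+1},z^{t+1})) \leq b\|y^{t+2}-y^{t+1}\| \leq b\,\Lambda(\gamma)^{-1/2}(r_{t+1}-r_{t+2})^{1/2} \leq b\,\Lambda(\gamma)^{-1/2} r_{t+1}^{1/2}$. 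In the geometric case $\theta \in (0,\tfrac12]$ this gives residual $\leq \kappa\eta^t$ with $\eta = \rho^{1/2}$, which is item (ii); in the case $\theta\in(\tfrac12,1)$ this gives residual $\leq \kappa\, t^{-1/(2(2\theta-1))} = \kappa\, t^{-1/(4\theta-2)}$, which is item (iii). The remaining point is that the residual I want is $\operatorname{dist}(0,\nabla F(z^t)+\partial G(z^t)+\nabla H(z^t))$, not $\operatorname{dist}(0,\partial\Theta_\gamma)$; I will show using \eqref{opti-1}--\eqref{opti-2} that a specific element of $\nabla F(z^t)+\partial G(z^t)+\nabla H(z^t)$ has norm bounded by a constant times $\|y^{t+1}-y^t\| + \|z^{t+1}-z^t\| + \|x^{t+1}-x^t\|$ (differences of neighboring iterates), each in turn $\lesssim \|y^{t+1}-y^t\|$, closing the loop.

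For item (i), $\theta = 0$: here $\varphi(s) = cs$, so $\varphi'(s) = c$ is constant, and the KL inequality reads $c\cdot\operatorname{dist}(0,\partial\Theta_\gamma(x^t,y^t,z^t)) \geq 1$ for all $x$ in the neighborhood with $\Theta^* < \Theta_\gamma(x) < \Theta^* + \eta$. But \cref{whole-con}(i) says $\Theta_\gamma(x^t,y^t,z^t) \downarrow \Theta^*$, and the subgradient bound together with $\|y^{t+1}-y^t\| \to 0$ (from \cref{clus-point}(i)) forces $\operatorname{dist}(0,\partial\Theta_\gamma(x^t,y^t,z^t)) \to 0$; this contradicts the KL inequality unless, for $t$ large, we are in the excluded case $\Theta_\gamma(x^t,y^t,z^t) = \Theta^*$, i.e. the energy is eventually constant. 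Eventual constancy of $\Theta_\gamma$ forces $\|y^{t+1}-y^t\| = 0$ for all large $t$ via \cref{decrease}, and then the optimality conditions collapse to $0 \in \nabla F(z^t) + \partial G(z^t) + \nabla H(z^t)$ for $t \geq t_0$. The main obstacle I anticipate is not any single estimate but bookkeeping: carefully establishing the subgradient bound $\operatorname{dist}(0,\partial\Theta_\gamma) \leq b\|y^{t+1}-y^t\|$ for this particular three-variable energy function $\Theta_\gamma$ in \eqref{meritfun2} — the $\|2y-z-x-\gamma\nabla H(y)\|^2$ and cross terms produce several pieces whose partial derivatives must be shown to telescope correctly against \eqref{opti-1}--\eqref{opti-2} — and then tracking the constants through the conversion from the $\Theta_\gamma$-residual to the genuine residual $\operatorname{dist}(0,\nabla F(z^t)+\partial G(z^t)+\nabla H(z^t))$. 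Once those two computational lemmas are in place, the recursion analysis and the case split on $\theta$ are routine.
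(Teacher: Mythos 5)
Your proposal is correct and follows essentially the same route as the paper: the true residual at $z^t$ is bounded by $(L+\beta+1/\gamma)\|z^t-y^t\|$ via the summed optimality conditions \eqref{opti-1}--\eqref{opti-2} and Lipschitz continuity, $\|z^t-y^t\|=\|x^t-x^{t-1}\|$ is controlled by $\|y^{t+1}-y^t\|$ and hence by $\sqrt{q_t}$ with $q_t:=\Theta_\gamma(x^t,y^t,z^t)-\Theta^*$ using \cref{x-bounded} and \cref{decrease}, and the decay of $q_t$ comes from the standard KL recursion $q_{t+1}\leq q_t-Cq_t^{2\theta}$ built from the descent inequality, the subgradient bound of \cref{subgradient}, and the KL inequality. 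The only difference is presentational: the paper simply cites the literature for that recursion analysis (and leaves the $\theta=0$ case implicit), whereas you spell both out.
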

Please refer to \cref{app2} for proofs of \cref{clus-point}, \cref{whole-con} and \cref{conv-rate}.

\section{ Numerical examples}
\label{sec:examples} 
In this section, we implement DYS algorithm on low-rank matrix recovery and compressed sensing experiments, and compare numerical results with other classical algorithms. All experiments are run in MATLAB R2019a on a desktop computer equipped with a 4.0GHz 8-core AMD processor and 16GB memory. All the singular value decompose (SVD) involved in the experiments were conducted by using PROPACK coming in a MTLAB version.
\subsection{Low rank matrix recovery}
\label{4.1}
Low rank matrix recovery problem is a fundamental problem with many important applications in machine learning and signal processing. Over the years, many algorithms have been developed to solve this problem. A classical model of solving this problem is as follows
\begin{equation}\label{ineq47}
\min_{X \in \mathbb{R}^{m \times n}}  rank(X) ~~~~~s.t~~~ \mathcal{P}_{\Omega} (X) = \mathcal{P}_{\Omega}(M), 
\end{equation}
where $\Omega$ is the index set of matrix entries that are uniformly sampled, $\mathcal{P}_{\Omega}$ is the orthogonal projector onto the span of matrices vanishing outside of $\Omega$ so that the $(i, j)$th component of $\mathcal{P}_{\Omega}(X)$ is equal to $X_{ij}$ if $(i, j) \in \Omega$ and zero otherwise. However, the form \eqref{ineq47} is generally NP-hard and is also NP-hard to approximate \cite{MJCD}. There have been some important breakthroughs on this problem in recent years. In \cite{JMD}, the authors introduced the Singular Value Projection (SVP) algorithm which is based on projected gradient descent to tackle the following more robust formulation of \eqref{ineq47},
\begin{equation}\label{ineq48}
\min_{X} \frac{1}{2} \| \mathcal{P}_{\Omega}(X) - \mathcal{P}_{\Omega}(M) \|_{2}^{2} + \mathcal{I}_{\mathcal{C}(r)}(X),
\end{equation}
where $\mathcal{C}(r) := \{ X | rank(X) \leq r \},$~$\mathcal{I}_{\mathcal{C}(r)}(\cdot)$ denotes the indicator function of $\mathcal{C}(r)$. Specifically, in \cite{JMD} the algorithm to solve the problem $\eqref{ineq48}$ can be expressed as 
\begin{equation} \label{SVP}
(SVP) ~ \begin{cases}
Y^{t+1} = X^t - \eta_t \mathcal{P}_\Omega^T (\mathcal{P}_\Omega(X^t) - b),~~~~~~~~~~~~~~~~~~~~~~~\\
X^{t+1} = P_{C(r)}( Y^{t+1} ), 
\end{cases}
\end{equation} 
where $U_r,$ $\Sigma_{r}$, $V_{r}$ are the singular value decompose of $Y^{t+1}$. On the other hand, Cai, Cand$\grave{e}$s and Shen studied the tightest convex relaxation of the problem \eqref{ineq47}. They presented a singular value thresholding (SVT) algorithm for matrix completion, which may be expressed as 
\begin{equation} \label{SVT}
(SVT) ~ \begin{cases}
Y^{t+1}= \Sigma_{j=1}^{r_t}(\sigma_j^{t} - \tau)u_j^{t}v_j^{t}, \\

X^{t+1}_{ij}=~\begin{cases}
 0, ~~~~ & \textmd{if} ~ (i, j) \not\in \Omega,\\
X^{t}_{ij} + \delta(M_{ij} - Y^{t+1}_{ij}), ~~~~ & \textmd{if} ~ (i, j) \in \Omega, 
\end{cases}
\end{cases} 
\end{equation} 
where $U^{t}$, $\Sigma^{t}$, $V^{t}$ are the singular value decomposition of the matrix $Y^{t}$,  and $u_j^{t}, \sigma_j^{t}, v_{j}^{t}$ are corresponding singular vectors and singular values, then they showed that the sequence $X^{t} $ generated by the SVT algorithm \eqref{SVT} converges to the unique solution of an optimization problem, namely, 
\begin{equation}\label{ineq49}
\begin{aligned}
&\min~~~~\tau \| X \|_{*} + \frac{1}{2} \| X \|_{F}^{2},\\
&s.t.~~~~\mathcal{P}_{\Omega}(X) = \mathcal{P}_{\Omega}(M).
\end{aligned}
\end{equation} 
From the above we can see that when the SVT method \cite{CCS} solve the low-rank matrix recovery problem, the sequence actually converges to a the problem with an additional regularization term $\| \cdot \|_{F}^2$.  The numerical results (see \cite{CCS}) showed that this method is very efficient , which indicates that the additional  regularization term $\| \cdot \|_{F}^2$ have a good effect for this problem. In addition, the effectiveness of regularization terms $\| \cdot \|^2$ has also been demonstrated in some other nonconvex optimizations (see, e.g., \cite{LZT, ZH}). Therefore, we here use the DYS method to solve the problem \eqref{ineq48} with an additional regularization term $\frac{\lambda}{2} \| X \|_{2}^{2}$, that is, 
\begin{equation}\label{ineq51}
\min_{X} \frac{1}{2} \| \mathcal{P}_{\Omega}(X) - \mathcal{P}_{\Omega}(M) \|_{2}^{2} + \mathcal{I}_{\mathcal{C}(r)}(X) + \frac{\lambda}{2} \| X \|_{2}^{2},
\end{equation}
where $\lambda$ is the regularization parameter. Hence,  applying the DYS method to solving \eqref{ineq51} with $F = \frac{1}{2} \| \mathcal{P}_{\Omega}(X) - \mathcal{P}_{\Omega}(M) \|_{2}^{2}$, $G = \mathcal{I}_{\mathcal{C}(r)}(X)$ and $H = \frac{\lambda}{2} \| X \|_{2}^{2}$ gives the following algorithm:
\begin{equation}\label{DYSm}
(DYS) ~ \begin{cases}
 U^{t+1}=
\begin{cases}
 \frac{1}{1+\gamma}\left( X_{i, j}^{t} + \gamma M_{i, j}\right),  ~~~ & (i, j) \in \Omega, \\
 X_{i, j}^{t},  ~~~~ & (i, j)\notin \Omega, \\
\end{cases}
\\
\\

V^{t+1} = P_{C(r)} ((2 - \gamma\lambda) U^{t+1} - X^{t}),\\

\\
X^{t+1} = X^{t} + (V^{t+1} - U^{t+1}).
\end{cases}
\end{equation}
We now verify the assumptions on $F$, $G$ and $H$ in convergence theory of the algorithm \eqref{DYSm} in \cref{sec:convergence}:
\begin{itemize}
\item[1.] Since $\mathcal{P}_{\Omega}$ is the orthogonal projection, we can easily know that $F(X) = \frac{1}{2}\|\mathcal{P}_{\Omega}(X) - \mathcal{P}_{\Omega}(M)\|^{2}$ is smooth with a Lipschitz continuous gradient whose Lipschitz continuity modulus $L$ is 1. This verifies the \cref{ass1} $(a1)$;
\item[2.] For the function $G(X) = I_{C(r)}(X)$, the proximal mapping of $G$ exists and hence the \cref{ass1} $(a2)$ is satisfied;
\item[3.] Clearly, $H(X) = \frac{\lambda}{2}\| X \|^2$ has a Lipschitz continuous gradient and is a coercive function.
\end{itemize}

Here we recall that when $\lambda= $2, algorithm \eqref{DYSm} is the classical DRS method solving \eqref{ineq48}. For the DYS and DRS methods, we adapt the heuristics described in \cite{LP} to select the parameter $\gamma$ as follows:\\

{\it We initialize $\gamma = k * \gamma_0$ and update $\gamma$ as $\max \{ \frac{\gamma}{2}, 0.9999\cdot \gamma_0 \}$ whenever $\gamma > \gamma_0$, and the sequence satisfies either $\| y^t - y^{t-1} \| > 1000/t$ or $\| y \|_{\infty} > 1e10$.}\\

\noindent For DYS method, we take $L=1$, $l=0$ and $\beta = 1$ in \eqref{cond-lam}, and we can easily get $\gamma_0 = 0.15$ satisfying : $\Lambda(\gamma) > 0$ when $0< \gamma < 0.15$. We choose $\gamma$ for the DR method as in \cite{LP}. We set $k = 10^6$ for all algorithms. We note that although $k = 10^6$ is selected large here, $\gamma$ will eventually be less than $\gamma_0$ as the iteration number increases, which also guarantees the convergence of the algorithm according to \cref{sec:convergence}.\\


{\bf Set simulation data and parameters for experiments.} We generate $n \times n$ matrices of rank $r$ by sampling two $n \times r$ factors $M_{L}$ and $M_{R}$ independently, each having i.i.d. Gaussian entries, and setting $M=M_{L}M_{R}^{*}$ as suggested in \cite{CR}. The set of observed entries $\Omega$ is sampled uniformly at random among all sets of cardinality $m$. The sampling ratio is defined as $p := \frac{m}{n^2}$. We wish to recover a matrix with lowest rank such that its entries are equal to those of $M$ on $\Omega$. In all experiments, we use 
\begin{equation}\label{ineq52}
\frac{\|\mathcal{P}_{\Omega}(X^t - M) \|_{F}}{\| \mathcal{P}_{\Omega}(M)\|_{F}} < 1 \times 10^{-4}
\end{equation}
as a stop criterion, where $\| \cdot \|_{F}$ represents the Frobenius norm. We compute the relative error as follows:
\begin{equation}\label{ineq53}
relative~error ~= \frac{\|X^{opt} - M\|_{F}}{\|M\|_{F}}.
\end{equation}
Next, we give the specific parameters selection and all the parameters are chosen to guarantee the convergence and according to the lowest relative error. For the SVT method, the parameters $\tau = 5n$ and $\delta = 1.2p^{-1}$ are chosen as in \cite{CCS}. For the $SVP$ method, we set the parameter $\eta = \frac{1}{p\sqrt{t}}$ as in \cite{JMD}. 
In algorithm \eqref{DYSm}, we set $\lambda = 1.5 \times10^{-6}$. In the following, we display our experimental results. We recover the matrix of $rank = 10$ or $30$ in different sizes $n = 3000,~5000,~8000,~10000~or~12000$ under the sampling ratio $p = 0.05$ or $p = 0.08$. All of these results are averaged over five runs. 

\begin{table}[htbp]\footnotesize
\centering
\resizebox{\textwidth}{!}{
\begin{tabular}{|c|c|c|c|c|c|c|c|c|c|}
\hline
\multicolumn{1}{|c|}{rank}&\multicolumn{1}{|c|}{size}
&\multicolumn{4}{|c|}{ Average runtime(s) / iterations}&\multicolumn{4}{|c|}{Relative error ( $10^{-4} $)}\\
\hline
&& SVT & SVP& DRS & DYS & SVT & SVP& DRS & DYS \\ \cline{2-10}
& 3000 & 38/92 & 159/618 & 217/337 & \textbf{33/56} & 1.38 & 1.41 & 1.41 & \textbf{0.95}\\ \cline{2-10}

rank=10 & 5000 & 127/74 & 378/526 & 486/282 & \textbf{90/54} & 1.20 & 1.27 & 1.27 & \textbf{0.93}\\\cline{2-10}

 & 8000 & 308/63 & 865/474 & 1169/252 & \textbf{231/50} & 1.17 & 1.18  & 1.19 & \textbf{0.90} \\\cline{2-10}

&10000& 481/60 & 1270/457 & 1746/244 & \textbf{323/45} & 1.05 & 1.15 & 1.15 & \textbf{0.85}\\\cline{1-10}

& 3000 & 80/167 & 418/658 & 514/607 & \textbf{64/77} & 1.85 & 1.68 & 1.89 & \textbf{1.10}\\\cline{2-10}

rank=30 & 5000 & 224/111 & 837/750 & 2816/297 & \textbf{160/67} & 1.51 & 1.57 & 1.28 & \textbf{1.02}  \\\cline{2-10}

& 8000 & 473/86 & 1650/606 & 2028/324 & \textbf{389/63} & 1.30 & 1.39 & 1.35 & \textbf{1.01}\\\cline{2-10}

& 10000 & 728/78 & 2113/562 & 990/405 & \textbf{571/62} & 1.24 & 1.32 & 1.56 & \textbf{0.98}\\ \cline{1-10}
\end{tabular}
}
\caption{Results of the average runtime, number of iterations and relative error when $p = 0.08$.}\label{tab-mc1}
\end{table}

\begin{table}[htbp]\footnotesize
\centering
\resizebox{\textwidth}{!}{
\begin{tabular}{|c|c|c|c|c|c|c|c|c|c|}
\hline
\multicolumn{1}{|c|}{rank}&\multicolumn{1}{|c|}{size}
&\multicolumn{4}{|c|}{ Average runtime(s) / iterations}&\multicolumn{4}{|c|}{Relative error ( $10^{-4} $)}\\
\hline
&& SVT & SVP& DRS & DYS & SVT & SVP& DRS & DYS \\ \cline{2-10}
& 5000 & 93/91 & 842/986 & 506/541 & \textbf{70/75} & 1.34 & 1.42 & 1.40 & \textbf{0.96}\\ \cline{2-10}

rank=10 & 8000 & 287/120 & 1886/849 & 1300/455 & \textbf{151/60} & 1.06 & 1.27 & 1.28 & \textbf{0.90}\\\cline{2-10}

 & 10000 & 278/69 & 2819/804 & 1843/429 & \textbf{227/59} & 1.19 & 1.24  & 1.22 & \textbf{0.91}\\\cline{2-10}

&12000& 737/65 & 6659/774 & 3857/411 & \textbf{567/58} & 1.19 & 1.20 & 1.20 & \textbf{0.95} \\\cline{1-10}

& 5000 & 208/163 & 2305/1681 & 1807/1010 &\textbf{143/106}  & 1.82 & 1.92 & 1.82 & \textbf{1.08} \\\cline{2-10}

rank=30 & 8000 & 367/113 & 4348/1214 & 2819/675 & \textbf{305/82} & 1.46 & 1.59 & 1.55 & \textbf{1.00}  \\\cline{2-10}

& 10000 & 590/99 & 6058/1087 & 3820/591 & \textbf{405/74} & 1.37 & 1.48 & 1.46 & \textbf{0.96} \\\cline{2-10}

& 12000 & 1402/90 & 6659/774 & 7150/541 & \textbf{989/72} & 1.29 & 1.20 & 1.40 & \textbf{0.95} \\ \cline{1-10}
\end{tabular}
}
\caption{Results of the average runtime, number of iterations and relative error when $p = 0.05$.}\label{tab-mc2}
\end{table}

\cref{tab-mc1} and \cref{tab-mc2} compare the runtime, the number of iterations and relative error required by various methods for $rank=10$ and $30$ in different sizes of matrix for sampling ratio $p=0.08$ or $0.05$. Clearly, DYS methods is substantially faster than the SVT, SVP and DRS methods. In particular, we can see from \cref{tab-mc2} that DYS method has very good behavior when the matrix size $n$ is large and the sampling rate $p$ is low.
We can see from \cref{tab-mc1} and \cref{tab-mc2} that DYS method can always find the solutions with highest accuracy.

{\bf Real data.}
We now evaluate our algorithms on the Movie-Lens \cite{data} data set, which contains one million ratings for 3900 movies by 6040 users. Table \ref{tab-mc3} shows the RMSE (root mean square error) obtained by each method with different rank $r$.  For SVP, we take step size $\eta$ as in \cite{JMD}.  For the classical DR  splitting and DYS algorithm, we adopt a heuristic method to choose $\gamma$ as before with $k = 100$ and we choose $\lambda = 10^{-3}$ in DYS method. Since the rank of matrices obtained by SVT cannot be fixed, we here don't consider SVT method.  As shown in \cref{tab-mc3},  we can see that DYS method outperforms the SVP and DR methods in terms of both RMSE and relative errors and runtime.
\begin{table}[htbp]\footnotesize
\centering
\begin{tabular}{|c|c|c|c|c|c|c|c|c|c|}
\hline
\multicolumn{1}{|c|}{size}&\multicolumn{3}{|c|}{ RMSE}&\multicolumn{3}{|c|}{relative error } &\multicolumn{3}{|c|}{runtime(s) / iterations }  \\
\hline
&~SVP~& ~DRS ~& ~DYS ~& ~SVP~& ~DRS~ & ~DYS~& ~SVP~& ~DRS~ & ~DYS~  \\ \cline{2-10}
5 & 1.05 & 0.84 & \textbf{0.82} & 0.28 & 0.23 & \textbf{0.22} & 467/330 & 307/235 & \textbf{229/191} \\ \cline{1-10}

10 & 0.99 & 0.79 & \textbf{0.77} & 0.26 & 0.21 & \textbf{0.20} & 603/319 & 388/269 & \textbf{275/197}    \\\cline{1-10}

 15 & 0.96 & 0.76 & \textbf{0.70} & 0.25 & 0.20 & \textbf{0.19} & 724/317 & 455/289 & \textbf{345/232}  \\\cline{1-10}

20 & 0.93 &  0.72 & \textbf{0.68} & 0.24 & 0.19 & \textbf{0.18} & 789/315 & 595/360 & \textbf{450/264}    \\\cline{1-10}

25 & 0.91 & 0.68 &\textbf{0.66}  & 0.24 & 0.18 & \textbf{0.17}  & 972/314 & 769/361& \textbf{487/276}  \\\cline{1-10}

30 & 0.88 & 0.65 & \textbf{0.64} & 0.23 & 0.17 & \textbf{0.17}  & 1114/313 & 874/374 & \textbf{696/306}   \\\cline{1-10}
\end{tabular}
\caption{RMSE, relative error and runtime obtained by each method with different rank $r$.}\label{tab-mc3}
\end{table}

\subsection{Compressed sensing}
Compressed sensing (CS) is an important research field in signal processing and mathematical research. A fundamental problem in CS is to recover a sparse vector from a set of linear measurements. Over the past decade, great efforts have been made to explore efficient and stable algorithms to solve the basis pursuit problem and its associated $l_1$-regularized problem (also known as Lasso \cite{T}):
\begin{equation}\label{ineq40}
\min_{x} \frac{1}{2}\| Ax - b \|_{2}^{2} + \lambda \| x \|_1,
\end{equation}
where $\lambda > 0$ is a regularized parameter, $A \in \mathbb{R}^{m \times n}$ is a sensing matrix, $b \in \mathbb{R}^m / \{ 0 \}$ the measurement data. At present, there are many algorithms to solve this model, such as \cite{BPCPE, E, GO, WYGY, YZ, YOGD}. In \cite{BPCPE}, the authors solved the Lasso problem \cref{ineq40} by ADMM (which called the ADMM-Lasso). We give the details of the algorithm in the following:

\begin{algorithm}[htp]
\caption{ ADMM for solving \eqref{ineq40}.}
\label{alg:admm-lasso}
\begin{algorithmic}
\STATE{
Define $\epsilon > 0$ and $z^{0},~y^{0}$.\\
~~~~{\bf {for}} $k = 0, 1, 2, \dots,$ Maxit {\bf{do}}
\begin{subequations}\label{ineq41}
\begin{align}
&y^{k+1} = (A^{T} A + \rho I)^{-1} (A^{T}b + \rho (z^{k} - x^{k})),\\
&z^{k+1} = \mathcal{S}_{\frac{\lambda}{\rho}} ( y^{k+1} + \frac{x^{k}}{\rho}),\\
&x^{k+1} = x^{k} + \rho (y^{k+1} - z^{k+1}).
\end{align}
\end{subequations}
~~~~~~~~{\bf {end for.}}
}
\end{algorithmic}
\end{algorithm}
Later, the authors in \cite{YLHX, LYHX} applied the difference of $l_1$ and $l_2$ norms as a nonconvex and Lipschitz continuous metric to solve unconstrained CS problem. They showed that when the sensing matrix $A$ is ill-conditioned, such as an oversampled discrete cosign transform (DCT) matrix, the $l_{1-2}$ metric will better than existing nonconvex compressed sensing solvers. We present the model of \cite{YLHX} in the following:
\begin{equation}\label{ineq42}
\min_{x} \frac{1}{2} \| Ax - b\|_2^2 + \lambda (\| x \|_1 - \| x \|_2), 
\end{equation}
where $\lambda > 0$ is a regularized parameter, $A \in \mathbb{R}^{m \times n}$ is a sensing matrix, $b \in \mathbb{R}^m / \{ 0 \}$ is the measurement data. They employed the difference of the convex functions algorithm (DCA) to solve this model, and the algorithm is given below (see \cref{alg:DCAL12}).

\begin{algorithm}[htbp]
\caption{ DCA-$l_{1-2}$ for solving \eqref{ineq42}.}
\label{alg:DCAL12}
\begin{algorithmic}
\STATE{Define $\epsilon >0$ and set $y^{0}=0.$}

\STATE{{\bf {for}} $t = 0, 1, 2, \dots,$ Maxoit  {\bf {do}}\\
~~~~ Define $z^{0},~x^{0}$.\\
~~~~~~~~{\bf {for}} $k = 0, 1, 2, \dots,$ Maxit {\bf{do}}
\begin{subequations}\label{ineq43}
\begin{align}
&y^{k+1} = (A^{T} A + \rho I)^{-1} (A^{T}b + \lambda \frac{\| y^t \|}{\|y^t\|_2} + \rho (z^{k} - x^{k})),\\
&z^{k+1} = \mathcal{S}_{\frac{\lambda}{\rho}} ( y^{k+1} + \frac{x^{k}}{\rho}),\\
&x^{k+1} = x^{k} + \rho (y^{k+1} - z^{k+1}).
\end{align}
\end{subequations}
~~~~~~~~{\bf {end for.}}\\
~~~$y^{t} = y^{k+1}.$\\
~~{\bf {end for.}}
}
\end{algorithmic}
\end{algorithm}
If we take $F = \frac{1}{2} \| Ax - b\|_2^2$, $G = \lambda \| x \|_1$ and $H = -\lambda \| x \|_2$ in model \eqref{model}, it is easy to verify that the assumptions in the convergence theory of \cref{sec:convergence} are satisfied. Here, we use the DYS method to solve model \eqref{ineq42} (called $DYS-l_{1-2}$ see \cref{alg:DYSL12}) and compare it with the ADMM-Lasso (which solves the Lasso problem \eqref{ineq40} by ADMM) and the $DCA-l_{1-2}$ (which solves the problem \eqref{ineq42} by DCA). In the following, we give the specific details of experiments setting.


\begin{algorithm}[htbp]
\caption{ DYS-$l_{1-2}$ for solving \eqref{ineq42}.}
\label{alg:DYSL12}
\begin{algorithmic}
\STATE{
Define $\epsilon > 0$ and $x^{0}$.\\
~~~~{\bf {for}} $k = 0, 1, 2, \dots,$ Maxit {\bf{do}}
\begin{subequations}\label{eq01}
\begin{align}
&y^{k+1} = (A^{T} A + \frac{1}{\gamma} I)^{-1} (A^{T}b + \frac{1}{\gamma} x^{k}),\\
&z^{k+1} = \mathcal{S}_{\gamma \lambda} ( 2y^{k+1} + \gamma \lambda \frac{y^{k+1}}{\| y^{k+1} \|} - x^{k}),\\
&x^{k+1} = x^{k} + (z^{k+1} - y^{k+1}).
\end{align}
\end{subequations}
~~~~~~~~{\bf {end for.}}
}
\end{algorithmic}
\end{algorithm}

{\bf Set the sensing matrix.}  We will set that the matrix $A$ is an ill-conditioned DCT matrix. Such matrices are generated as follows:
$$
A_i = \frac{1}{\sqrt{m}} cos( 2i \pi \xi / F), ~~~~i = 1, \cdots, n, 
$$
where $\xi \in \mathbb{R}^{m} \sim \mathcal{U}( [0, 1]^m )$ whose components are uniformly and independently sampled from $[0,1]$ and $F \in \mathbb{N}$ is the refinement factor. In fact, it is the real part of the random partial Fourier matrix (see \cite{FL}). The number $F$ is bound up with the conditioning of $A$, in the sense that, the coherence of matrix $A$ (see Definition 2.2 in \cite{YLHX}) becomes large as $F$ increases. In our experiments, for $A \in \mathbb{R}^{m \times 2000}$ with $m \in \{ 80, 100, 120, 150, 200\}$, the coherence of $A$ always exceeds 0.99 when $F=10$ for all possible $m$. Although such sampled $A$ does not have a good restricted isometry property (RIP) in any case, it is still possible to recover the sparse vector $\bar{x}$ as long as its spikes are sufficiently separated. More specifically, the elements of $supp(\bar{x})$ are randomly chosen such that 
$$
\min_{i, j \in supp(\bar{x})} | i - j | \geq L.
$$
Here, $L$ is called the {\it minimum separation.} 

{\bf Select parameters for experiments.} We set $L=2F$ and implement our experiment as follows. After obtaining a sensing matrix as described above, we generate a test signal $\bar{x}$ of sparsity $s$, which supported on a random index set with independent and identically distributed Gaussian entries. Then we can calculate the measurement $b = A\bar{x}$ and apply it to every method to produce a reconstruction signal $x^*$. The reconstruction is considered a success if the relative error satisfy:
\begin{equation}\label{ineq44}
\frac{\| x^* - \bar{x} \|_{2}}{\| \bar{x} \|_{2}} < 10^{-4}.
\end{equation}
We run 100 independent experiments and record their corresponding success rates at different sparse levels, and we figure out the mean and standard deviations of the relative errors of all successful experiments.
According to \cite{BPCPE}, for ADMM-lasso in \cref{alg:admm-lasso}, we choose $\lambda = 10^{-6},~\beta=1,~\rho=10^{-5},~\epsilon^{abs} = 10^{-7},~\epsilon^{rel} = 10^{-5}$ and its maximum number of iteration $maxiter = 50000.$ According to \cite{YLHX}, for DCA-$l_{1-2}$ in \cref{alg:DCAL12}, we choose $\lambda = 10^{-5}$, $\epsilon^{abs} = 10^{-7}$, $\epsilon^{rel} = 10^{-5}$, maximum number of iterations of the outer loop and inner loop are $Maxoit = 10$ and $Maxit = 5000$. All parameters are selected according to the choice in \cite{YLHX, BPCPE}, which makes results of their experiments the best. Meanwhile, for the outer iteration in \cref{alg:DCAL12}, we adopted 
\begin{equation}\label{ineq45}
\frac{\| x^{k+1} - x^{k}\|_2}{\max\{\|x^k\|_{2}, 1\}} < 10^{-2}.
\end{equation}
For DYS-$l_{1-2}$ in algorithm \cref{alg:DYSL12}, we choose $\lambda = 10^{-5}~\epsilon^{abs} = 10^{-7},~\epsilon^{rel} = 10^{-5}$ and $Maxit = 50000$. For the choice of $\gamma$, we also use the heuristics as in \cref{4.1}.

According to \cite{BPCPE}, a stopping criterion for ADMM-Lasso, DYS-$l_{1-2}$ and the inner iteration of DCA-$l_{1-2}$ is given by 
\begin{equation}\label{ineq46}
\| r^k \|_2 \leq \sqrt{n} \epsilon^{abs} + \epsilon^{rel} \max \{ \| y^{k} \|_2, \| z^{k} \|_2\},~~~~\|s^{k}\|_2 \leq \sqrt{n} \epsilon^{abs} + \epsilon^{rel}\| x^{k} \|_2,
\end{equation}
where $r^k = y^k - z^k$, $s^k = \rho (z^k - z^{k-1})$ are primal and dual residuals at the $k$th iteration respectively. $\epsilon^{abs} > 0$ is an absolute tolerance and $\epsilon^{rel} > 0$ a relative tolerance.\\

{\bf Test results on highly coherent matrix.}
\cref{fCS1} shows the success rates of three different algorithms under various sparsity $s$ and various  sizes of $m$. We can see from the figure that the areas of the blue part corresponding to DCA-$l_{1-2}$ and DYS-$l_{1-2}$ are almost the same, and they are smaller than the area of the blue part corresponding to ADMM-Lasso. This means that the success rates of DYS-$l_{1-2}$ and DCA-$l_{1-2}$ are basically the same, but they are both better than ADMM-Lasso. 
\begin{figure}[htbp]
\centering
\includegraphics[width = 4in, height= 1.5in ] {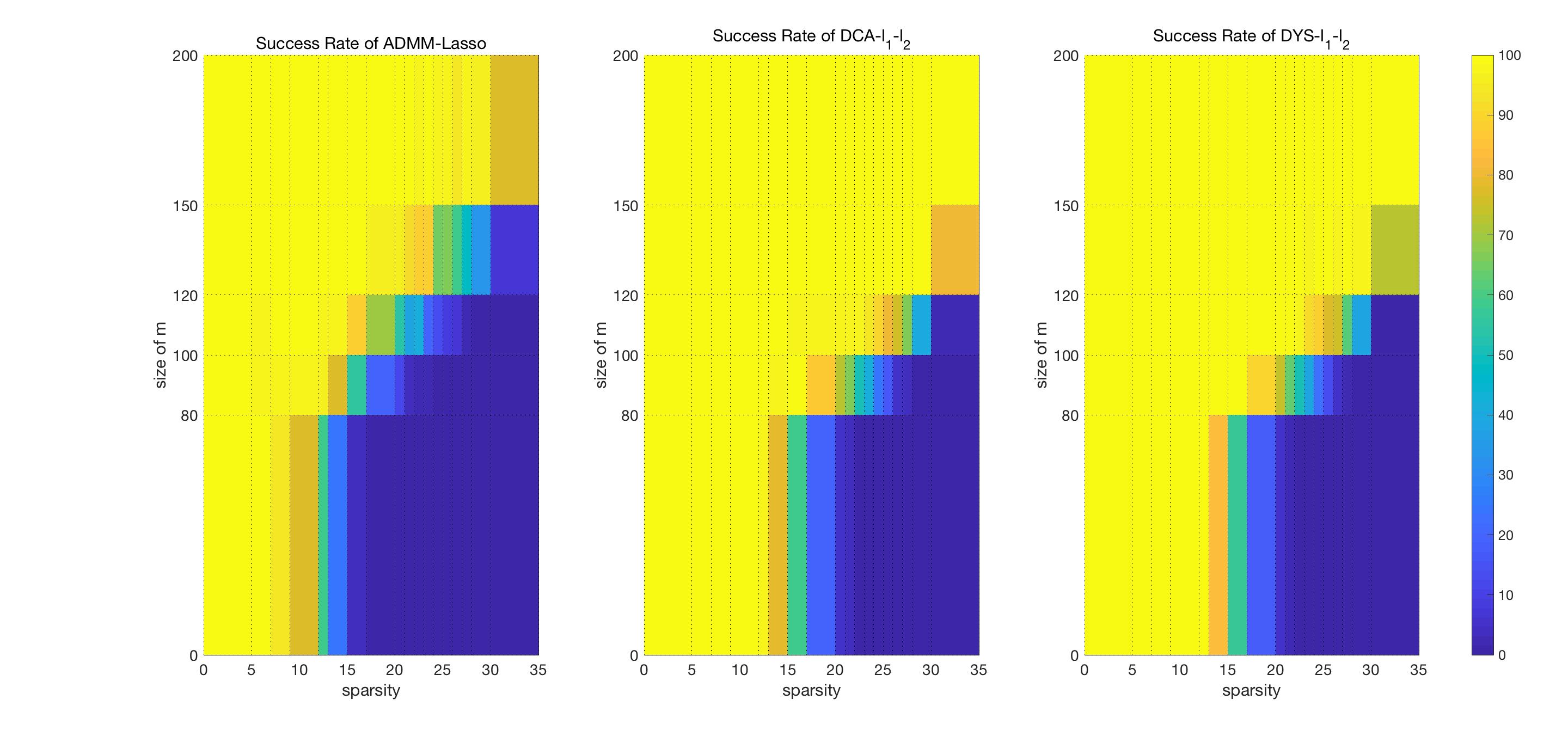}
\caption{Success rate of different methods}\label{fCS1}
\end{figure}

 \begin{table}[htbp]\footnotesize
\centering
\resizebox{\textwidth}{!}{
\begin{tabular}{|c|c|c|c|c|c|}
\hline
Algorithm & s=5 & s=9 &  s=15 & s=17 & s=20 \\ \cline{1-6}
ADMM-Lasso   &5.49/0.90 & 10.72/2.36& 29.40/19.02& 44.12/25.00& 75.76/28.89 \\ \cline{1-6}
DCA - $l_{1-2}$ & 5.07/0.41 & 9.09/0.71 & \textbf{15.39/1.33} &\textbf{17.17/0.60} & \textbf{28.42}/24.22 \\\cline{1-6}
DYS - $l_{1-2}$ & \textbf{5.00/0.00} & \textbf{9.08/0.28}& 16.00/1.39 & 19.29/3.36 & 33.03/\textbf{20.55}  \\\cline{1-6}
\end{tabular}
}
\caption{The average of the sparsity and the standard deviation when the noise level is 0.}\label{CS1}
\end{table}

\cref{CS1} shows the average of the sparsity and the standard deviation when the noise level is 0. We calculate the sparsity and relative error on truncated signal, that is, if the component is less than $5 \times 10^{-6}$, then we take the corresponding value to be 0. We can see from \cref{CS1} that the sparsity and the standard deviation of DCA-$l_{1-2}$ and DYS-$l_{1-2}$ are comparable, which are both smaller that ADMM-Lasso’s. However, from \cref{CS2},  the relative error of DYS-$l_{1-2}$ is smallest, which means that the solution given by DYS-$l_{1-2}$ is the most accurate. When the measurement data are added noises with different levels, \cref{CS3} shows the relative error of the signals recovered by ADMM-Lasso, DCA-$l_{1-2}$ and DYS-$l_{1-2}$. We can also see that the signal recovered by DYS-$l_{1-2}$ method is more accurate than the other two methods. Therefore, overall, DYS-$l_{1-2}$ performs better than the other two algorithms.

 \begin{table}[htbp]\footnotesize
\centering
\resizebox{\textwidth}{!}{
\begin{tabular}{|c|c|c|c|c|c|}
\hline
Algorithm & s=5 & s=9 &  s=15 & s=17 & s=20 \\ \cline{1-6}
ADMM-Lasso   &0.09/0.08 & 0.12/0.11 & 0.20/0.21& 0.20/0.19& \textbf{0.07/0.19} \\ \cline{1-6}
DCA - $l_{1-2}$ & 0.31/0.16 & 0.36/0.15 & 0.44/0.15 &0.43/0.14 & 0.42/0.21 \\\cline{1-6}
DYS - $l_{1-2}$ & \textbf{0.08/0.03} & \textbf{0.09/0.02}& \textbf{0.13/0.03} & \textbf{0.15/0.08} & 0.20/0.15 \\\cline{1-6}
\end{tabular}
}
\caption{The average of the relative error ($10^{-4}$)and the standard deviation when the noise level is 0.}\label{CS2}
\end{table}

\begin{table}[h!bp]\footnotesize
\centering
\resizebox{\textwidth}{!}{
\begin{tabular}{|c|c|c|c|c|c|c|}
\hline
\multicolumn{1}{|c|}{noise}&\multicolumn{1}{|c|}{Algorithm}
&\multicolumn{5}{|c|}{ relative error / standard deviation}\\
\hline
&& s=5 & s=9 & s=15 & s=17 & s=20  \\\cline{2-7}
&ADMM-Lasso & 0.5394~/~0.4077 & 0.5803~/~0.8540   & 0.6061~/~0.5302  & 0.5808~/~0.4760 & 0.6761~/~0.6827 \\\cline{2-7}

$\sigma$ =0.01 &DCA - $l_{1-2}$~& 0.5443~/~0.5124        & 0.5603~/~0.8244        & 0.4687~/~0.4500        & 0.5513~/~0.5822        & 0.6444~/~0.5413 \\\cline{2-7}

 &~DYS - $l_{1-2}$~&\textbf{0.2476~/~0.2799} &\textbf{0.2171~/~0.2483}&\textbf{0.2338~/~0.2389}&\textbf{0.3169~/~0.3426}&\textbf{0.3495~/~0.2303} \\\cline{1-7}

& ADMM-Lasso & 0.2384~/~0.2904         & 0.2297~/~0.2515        & 0.3512~/~0.3105        & 0.3804~/~0.2391        & 0.4538~/~0.3004 \\\cline{2-7}
$\sigma$ = 0.005 &~DCA - $l_{1-2}$  & 0.2168~/~0.3324         & 0.1969~/~0.3019        & 0.3002~/~0.3651        & 0.3117~/~0.2725        & 0.3636~/~0.3724 \\\cline{2-7}
 &~DYS - $l_{1-2}$~  &\textbf{0.0793~/~0.1189} &\textbf{0.0717~/~0.0743}&\textbf{0.1302~/~0.1226}&\textbf{0.1306~/~0.1229}&\textbf{0.2014~/~0.1907} \\\cline{1-7}

& ADMM-Lasso & 0.0305~/~0.0320         & 0.0419~/~0.0516        & 0.1025~/~0.0997        & 0.1174~/~0.1286        & 0.2798~/~0.2170 \\\cline{2-7}
$\sigma$ = 0.001 &~DCA - $l_{1-2}$  & 0.0185~/~0.0241         & 0.0249~/~0.0392        & 0.0464~/~0.0520        & 0.0447~/~0.0771        & 0.0864~/~0.2067 \\\cline{2-7}
&~DYS - $l_{1-2}$~  &\textbf{0.0081~/~0.0042} &\textbf{0.0077~/~0.0047}&\textbf{0.0105~/~0.0068}&\textbf{0.0126~/~0.0080}&\textbf{0.0403~/~0.2027} \\\cline{1-7}

& ADMM-Lasso & 0.0197~/~0.0375         & 0.0192~/~0.0211        & 0.0351~/~0.0501        & 0.0731~/~0.1067        & 0.1940~/~0.1869 \\\cline{2-7}
$\sigma$ = 0.0005 &~DCA - $l_{1-2}$  & 0.0116~/~0.0210         & 0.0086~/~0.0070        & 0.0086~/~0.0070      & 0.0135~/~0.0217        & 0.0462~/~0.1261 \\\cline{2-7}
&~DYS - $l_{1-2}$~  &\textbf{0.0031~/~0.0014} &\textbf{0.0035~/~0.0002}&\textbf{0.0040~/~0.0017}&\textbf{0.0047~/~0.0025}&\textbf{0.0077~/~0.0128} \\\cline{1-7}

\end{tabular}
}
\caption{The average of the relative error and the standard deviation with different noise levels.}\label{CS3}
\end{table}

\section{Concluding remarks}
\label{sec:conclude}
In this paper, we employ a three-operator splitting proposed by Davis and Yin (called DYS)  to  resolve two kinds of nonconvex problems in sparsity regularization: sparse signal recovery and low rank matrix recovery. We first study the convergence behavior of  Davis-Yin splitting algorithm in nonconvex setting. By constructing a new energy function associated with Davis-Yin method, we prove the global convergence and establish local convergence rate of the Davis-Yin splitting method when the parameter $\gamma$ is less than a computable threshold and the sequence generated has a cluster point. We also show the boundedness of the sequence generated by Davis-Yin splitting method when some sufficient conditions are satisfied, thus the existence of cluster points. Finally,  we show some numerical experiments to compare the DYS algorithm with some classical efficient algorithms for sparse signal recovery and low rank matrix completion. The numerical experiments indicate that the Davis-Yin splitting is significantly better than these methods.\\

\appendix
\section{Proofs of \cref{decrease} and \cref{bounded}}
\label{app1}

To prove Lemma 3.1 and Theorem 3.5, we first need the following two lemmas. The proof of Lemma A.1 is very easy, we omit it here.
\begin{lemma}\label{x-bounded} 
Suppose $F$ satisfies (a1) in \cref{ass1}. Then the sequence  $\{(x^t, y^t, z^t)\}$ generated by \cref{alg:DYS} satisfies 
\begin{equation}\label{xineq}
\| x^{t} - x^{t-1} \| \leq ( 1 + \gamma L ) \| y^{t+1} - y^{t} \|.
\end{equation}
\end{lemma}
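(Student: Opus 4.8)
The plan is to read off an explicit formula for $x^{t}$ from the first-order optimality condition of the $y$-subproblem and then invoke the Lipschitz continuity of $\nabla F$.

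First I would use \eqref{opti-1}: since $F$ is differentiable by (a1) of \cref{ass1}, the optimality condition characterizing $y^{t+1}$ in \eqref{algy} is $0 = \nabla F(y^{t+1}) + \frac{1}{\gamma}(y^{t+1} - x^{t})$, which rearranges to $x^{t} = y^{t+1} + \gamma \nabla F(y^{t+1})$. Applying the same relation one iteration earlier (the step producing $y^{t}$ from $x^{t-1}$) gives $x^{t-1} = y^{t} + \gamma \nabla F(y^{t})$.

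Subtracting these two identities yields $x^{t} - x^{t-1} = (y^{t+1} - y^{t}) + \gamma\big(\nabla F(y^{t+1}) - \nabla F(y^{t})\big)$. Taking norms, using the triangle inequality, and bounding $\|\nabla F(y^{t+1}) - \nabla F(y^{t})\| \le L\|y^{t+1} - y^{t}\|$ via \eqref{flip}, I obtain $\|x^{t} - x^{t-1}\| \le (1 + \gamma L)\|y^{t+1} - y^{t}\|$, which is exactly \eqref{xineq}.

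There is essentially no obstacle; the argument is a one-line manipulation of the optimality condition together with Lipschitz continuity, which is presumably why the authors omit it. The only point worth noting is that $y^{t+1}$ (and $y^{t}$) must actually satisfy \eqref{opti-1} — this holds by construction in \cref{alg:DYS}, since $y^{t+1}$ is a minimizer of a differentiable-plus-quadratic objective — but no well-posedness beyond what is already assumed is needed.
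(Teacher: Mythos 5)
Your argument is correct and is exactly the standard one the paper has in mind when it says the proof "is very easy" and omits it: from \eqref{opti-1} one gets $x^{t} = y^{t+1} + \gamma \nabla F(y^{t+1})$ and $x^{t-1} = y^{t} + \gamma \nabla F(y^{t})$, and subtracting, taking norms, and applying \eqref{flip} gives \eqref{xineq}. No gaps; the indexing (pairing $x^{t}$ with $y^{t+1}$) is handled correctly.
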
 

\begin{lemma}\label{eq-rela}
Let $a,~b,~c,~d \in \mathbb{R}^{n}$. Then we have 
\begin{equation}\label{eq100}
\begin{aligned}
&\| 2a - b - c - d \|^2 - \| a - c - d\|^2\\
& = (\| a - c \|^2 - \| b - c \|^2) + 2 \| a - b \|^2 + 2 \langle d, b - a \rangle.
\end{aligned}
\end{equation} 
\end{lemma}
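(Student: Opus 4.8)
The plan is to verify \eqref{eq100} by a direct expansion of the squared norms into inner products, organized so that the bulk of the terms cancel. Both sides are quadratic in the vectors $a,b,c,d$, so the identity is purely algebraic; the only real work is choosing a grouping that keeps the computation short.

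First I would rewrite the arguments on the left-hand side relative to $a-c$, namely $2a-b-c-d = (a-c)+(a-b-d)$ and $a-c-d = (a-c)-d$. Expanding each square via $\|u+w\|^2 = \|u\|^2 + 2\langle u,w\rangle + \|w\|^2$, the two $\|a-c\|^2$ terms cancel and the cross terms combine as $2\langle a-c, a-b-d\rangle + 2\langle a-c, d\rangle = 2\langle a-c, a-b\rangle$, so the left-hand side becomes $2\langle a-c, a-b\rangle + \|a-b-d\|^2 - \|d\|^2$. Expanding $\|a-b-d\|^2 = \|a-b\|^2 - 2\langle a-b,d\rangle + \|d\|^2$ cancels $\|d\|^2$ and leaves the left-hand side equal to $2\langle a-c, a-b\rangle + \|a-b\|^2 - 2\langle a-b,d\rangle$.

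Next I would massage the right-hand side using $\|a-c\|^2 - \|b-c\|^2 = \langle a-b,\,(a-c)+(b-c)\rangle = \langle a-b,\, a+b-2c\rangle$ and $2\langle d, b-a\rangle = -2\langle a-b, d\rangle$. Matching the $\langle a-b,d\rangle$ terms on the two sides, it then suffices to check $2\langle a-c,a-b\rangle + \|a-b\|^2 = \langle a-b, a+b-2c\rangle + 2\|a-b\|^2$, i.e. $2\langle a-c,a-b\rangle - \|a-b\|^2 = \langle a-b, a+b-2c\rangle$; and indeed $2\langle a-c,a-b\rangle - \|a-b\|^2 = \langle a-b,\, 2(a-c)-(a-b)\rangle = \langle a-b,\, a+b-2c\rangle$, which closes the argument.

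I do not expect any genuine obstacle: the statement is a polynomial identity in four vector variables, and the sole "difficulty" is bookkeeping. The grouping around $a-c$ above is what keeps it to a few lines; a completely mechanical alternative would be to expand every term into the monomials $\|a\|^2,\langle a,b\rangle,\dots$ and compare coefficients, which is longer but equally straightforward.
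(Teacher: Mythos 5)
Your verification is correct, and it is essentially the same argument as the paper's: a direct expansion of the squared norms into inner products, with the polarization-type identity relating $2\langle a-c,a-b\rangle$ (respectively $\|a-c\|^2-\|b-c\|^2$) to the other terms doing the work. The only difference is bookkeeping — the paper expands the left-hand side via $2a-b-c-d=(a-b)+(a-c-d)$ and pushes forward to the right-hand side, while you expand both sides and match — so there is nothing further to flag.
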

\begin{proof}The proof is basic, it just requires some simple identities.
\begin{equation}
\begin{aligned}
&\| 2a - b - c - d \|^2 - \| a - c - d \|^2\\
&=\| a - b + ( a - c - d) \|^2 - \| a - c - d \|^2\\
&= \| a - b \|^2 + 2 \langle a - c - d, a - b \rangle\\
&= \| a - b \|^2 + 2 \langle a - c, a - b \rangle + 2\langle d, b - a \rangle\\
&= \| a - b \|^2 + \left( \| a - c \|^2 + \| a - b \|^2 - \| b - c\|^2 \right) + 2 \langle d, b - a \rangle\\
&= \left( \| a - c \|^2 - \| b - c \|^2 \right) + 2 \| a - b \|^2 + 2 \langle d, b - a \rangle.
\end{aligned}
\end{equation}
So we get the conclusion.
\end{proof}

\vskip0.10in
{\it Proof of \cref{decrease}.}
We will show first that:
\begin{equation}\label{eq200}
\begin{aligned}
&F(y^{t+1}) + G(z^{t+1}) + \frac{1}{2\gamma} \| 2y^{t+1} - z^{t+1} - x^{t+1} - \gamma \nabla H(y^{t+1})\|^2\\
 &~~~~~-\frac{1}{2\gamma} \| x^{t+1} - y^{t+1} + \gamma \nabla H(y^{t+1}) \|^2 - \frac{1}{\gamma}\| y^{t+1} - z^{t+1} \|^2\\
& \leq F(y^t) + G(z^t) + \frac{1}{2\gamma} \| 2y^t - z^t - x^t - \gamma \nabla H(y^t) \|^2 - \frac{1}{2\gamma}\| x^t - y^t + \gamma \nabla H(y^t) \|^2\\
& ~~~~~- \frac{1}{\gamma}\| y^t - z^t \|^2 + \langle \nabla H(y^{t+1}), z^t - y^{t+1} \rangle - \langle \nabla H(y^t), z^t - y^t \rangle \\
& ~~~~~ + \frac{1}{\gamma} \|y^{t+1} - z^t \|^2- \frac{1}{2}(\frac{1}{\gamma} - l) \| y^{t+1} - y^t \|^2
\end{aligned}
\end{equation}
and provide afterwards an upper estimate for the terms $\langle \nabla H(y^{t+1}), z^t - y^{t+1} \rangle - \langle \nabla H(y^t), z^t - y^t \rangle$ and $\frac{1}{\gamma}\| y^{t+1} - z^t \|^2.$

Since $F + \frac{1}{2\gamma}\| x^t - \cdot \|^2$ is a strongly convex function with modulus $\frac{1}{\gamma} - l$ and $y^{t+1}$ is a minimizer of \eqref{algy}, we obtain
\begin{equation}\label{eq201}
F(y^{t+1}) + \frac{1}{2\gamma}\|y^{t+1} - x^t \|^2 \leq F(y^t) + \frac{1}{2\gamma}\|y^t - x^t \|^2 - \frac{1}{2}(\frac{1}{\gamma} - l) \| y^{t+1} - y^t \|^2.
\end{equation}
From \eqref{algz}, we have
\begin{equation}\label{eq202}
\begin{aligned}
&G(z^{t+1}) + \frac{1}{2\gamma}\| z^{t+1} - 2y^{t+1} + \gamma \nabla H(y^{t+1}) + x^t \|^2\\
&\leq G(z^t) + \frac{1}{2\gamma}\| z^t - 2y^{t+1} +\gamma \nabla H(y^{t+1})  + x^t \|^2.
\end{aligned}
\end{equation}
 Adding \eqref{eq201} and \eqref{eq202} yields
 \begin{equation}\label{eq203}
 \begin{aligned}
& F(y^{t+1}) + G(z^{t+1}) + \frac{1}{2\gamma}\| 2y^{t+1} - z^{t+1} - x^t - \gamma \nabla H(y^{t+1}) \|^2\\
&~~~~~~~~~~~~~~~+\frac{1}{2\gamma}\|y^{t+1} - x^t \|^2\\
&\leq F(y^t) + G(z^t) + \frac{1}{2\gamma} \|2y^{t+1} - z^t - x^t - \gamma \nabla H(y^{t+1}) \|^2\\
&~~~~~~~~~~~~~~~+\frac{1}{2\gamma}\| y^t - x^t \|^2 - \frac{1}{2}(\frac{1}{\gamma} - l)\| y^{t+1} - y^t \|^2. 
\end{aligned}
\end{equation}
On the other hand, by applying some elementary identities and \eqref{algx} we also have 
\begin{equation}\label{eq204}
\begin{aligned}
&\| 2y^{t+1} - z^{t+1} - x^t - \gamma \nabla H(y^{t+1}) \|^2\\
&=\| 2y^{t+1} - z^{t+1} - x^{t+1} - \gamma\nabla H(y^{t+1}) \|^2\\
&~~~+2 \langle 2y^{t+1} - x^{t+1} - z^{t+1} - \gamma \nabla H(y^{t+1}), x^{t+1} - x^t \rangle + \| x^{t+1} - x^t \|^2\\
& =\| 2y^{t+1} - x^{t+1} - z^{t+1} - \gamma \nabla H(y^{t+1}) \|^2 + 2\langle 2y^{t+1}  - 2z^{t+1}, x^{t+1} - x^t \rangle \\
&~~~+ 2\langle y^{t+1} - x^t - \gamma \nabla H(y^{t+1}), x^{t+1} - x^t \rangle + \| x^{t+1} - x^t \|^2.
\end{aligned}
\end{equation}
Note that, by \eqref{algx} we have
\begin{equation}\label{eq205}
2 \langle 2y^{t+1} - 2z^{t+1}, x^{t+1} - x^t \rangle = -4 \| x^{t+1} - x^t \|^2.
\end{equation}
By the elementary identity $2\langle a, b \rangle = -(\| a - b \|^2 - \| a \|^2 - \| b \|^2)$, we have
\begin{equation}\label{eq206}
\begin{aligned}
&2 \langle y^{t+1} - x^t - \gamma \nabla H(y^{t+1}), x^{t+1} - x^t \rangle\\
&= - \left( \| y^{t+1} - x^{t+1} - \gamma \nabla H(y^{t+1}) \|^2 - \| y^{t+1} - x^t - \gamma \nabla H(y^{t+1})\|^2 - \| x^{t+1} - x^t \|^2 \right).
\end{aligned}
\end{equation}
Substituting \eqref{eq205} and \eqref{eq206} into \eqref{eq204}, we get
\begin{equation}\label{eq207}
\begin{aligned}
&\| 2y^{t+1} - z^{t+1} - x^t - \gamma \nabla H(y^{t+1}) \|^2\\
&= \| 2y^{t+1} - z^{t+1} - x^{t+1} - \gamma \nabla H(y^{t+1}) \|^2 - \| y^{t+1} - x^{t+1} - \gamma \nabla H(y^{t+1})\|^2\\
&~~~+ \|y^{t+1} - x^t - \gamma \nabla H(y^{t+1}) \|^2 - 2\| x^{t+1} - x^t \|^2.
\end{aligned}
\end{equation}
Combining \eqref{eq203} and \eqref{eq207}, and then using \cref{eq-rela}, we obtain
\begin{equation}\label{eq208}
\begin{aligned}
&F(y^{t+1}) + G(z^{t+1}) + \frac{1}{2\gamma}\| 2y^{t+1} - z^{t+1} - x^{t+1} - \gamma \nabla H(y^{t+1})\|^2\\
&~~~-\frac{1}{2\gamma}\| x^{t+1} - y^{t+1} + \gamma \nabla H(y^{t+1})\|^2 - \frac{1}{\gamma}\| y^{t+1} - z^{t+1} \|^2\\
&\leq F(y^t) + G(z^t) + \frac{1}{2\gamma}\| 2y^{t+1} - z^t - x^t - \gamma \nabla H(y^{t+1}) \|^2 + \frac{1}{2\gamma}\| y^t - x^t \|^2\\
& ~~~- \frac{1}{2\gamma}\| y^{t+1} - x^t - \gamma \nabla H(y^{t+1})\|^2 - \frac{1}{2\gamma}\| y^{t+1} - x^t \|^2 - \frac{1}{2}(\frac{1}{\gamma} - l)\|y^{t+1} - y^t \|^2\\
&= F(y^t) + G(z^t) - \frac{1}{2\gamma}\| z^t - x^t \|^2 + \langle \nabla H(y^{t+1}), z^t - y^{t+1} \rangle + \frac{1}{\gamma} \| y^{t+1} - z^t \|^2\\
& ~~~+ \frac{1}{2\gamma}\| y^t - x^t \|^2 - \frac{1}{2}(\frac{1}{\gamma} - l) \| y^{t+1} - y^t \|^2\\
& = F(y^t) + G(z^t) + \frac{1}{2\gamma} \left( \| y^t - x^t \|^2 - \| z^t - x^t \|^2 \right) + \langle \nabla H(y^t), z^t - y^t \rangle\\
& ~~~+ \frac{1}{\gamma}\| y^{t+1} - z^t \|^2 + \langle \nabla H(y^{t+1}), z^t - y^{t+1} \rangle - \langle \nabla H(y^t), z^t - y^t \rangle \\
&~~~- \frac{1}{2}(\frac{1}{\gamma} - l) \| y^{t+1} - y^t \|^2\\
& = F(y^t) + G(z^t) + \frac{1}{2\gamma}\| 2y^t - z^t - x^t - \gamma \nabla H(y^t) \|^2 -\frac{1}{2\gamma} \| x^t - y^t + \gamma \nabla H(y^t) \|^2\\
& ~~~+ \langle \nabla H(y^{t+1}), z^t - y^{t+1} \rangle - \langle \nabla H(y^{t}), z^t - y^t \rangle + \frac{1}{\gamma} \| y^{t+1} - z^t \|^2 \\
& ~~~ - \frac{1}{\gamma}\| y^t - z^t \|^2 - \frac{1}{2}(\frac{1}{\gamma} - l) \| y^{t+1} - y^t \|^2.
\end{aligned}
\end{equation}
This proves the \eqref{eq200}.

Next, we will focus on estimating $\langle \nabla H(y^{t+1}), z^t - y^{t+1} \rangle - \langle \nabla H(y^t), z^t - y^t \rangle$. According to the descent lemma we have
\begin{equation}\label{eq209}
\begin{aligned}
&\langle \nabla H(y^{t+1}), z^t - y^{t+1} \rangle - \langle \nabla H(y^{t}), z^t - y^t \rangle\\
&=\langle \nabla H(y^{t+1}) - \nabla H(y^t), z^t - y^{t+1} \rangle - \langle \nabla H(y^t), y^{t+1} - y^t \rangle\\
&\leq H(y^{t}) - H(y^{t+1}) + \frac{\beta}{2} \| y^{t+1} - y^t \|^2 + \langle \nabla H(y^{t+1}) - \nabla H(y^t), z^t - y^{t+1} \rangle\\
&\leq H(y^t) - H(y^{t+1}) + \frac{\beta}{2}\| y^{t+1} - y^t \|^2 + \frac{\beta}{2} \|y^{t+1} - y^t \|^2 + \frac{\beta}{2}\|y^{t+1} - z^t \|^2.
\end{aligned}
\end{equation}

Finally, we only need to estimate $\| y^{t+1} - z^t \|^2.$
From \eqref{opti-1},
\begin{equation}\label{eq210}
\nabla (F + \frac{l}{2} \| \cdot \|^2) (y^{t+1}) = \frac{1}{\gamma}(x^t - y^{t+1}) + l y^{t+1}.
\end{equation}
Note that $F + \frac{l}{2} \| \cdot \|^2$ is a convex function by assumption, using the monotonicity of gradient of a convex function, we have,
\begin{equation}\label{eq211}
\langle \left( \frac{1}{\gamma}(x^t - y^{t+1}) + l y^{t+1} \right) - \left( \frac{1}{\gamma}(x^{t-1} - y^t) + l y^t \right), y^{t+1} - y^t \rangle \geq 0,
\end{equation}
which gives 
\begin{equation}\label{eq212}
\langle y^{t+1} - y^t, x^t - x^{t-1} \rangle \geq (1 - \gamma l) \| y^{t+1} - y^t \|^2.
\end{equation}
Therefore, by \eqref{algx}, \eqref{eq212} and \eqref{eq-rela}, we have 
\begin{equation}\label{eq213}
\begin{aligned}
&\| y^{t+1} - z^t \|^2\\
&= \| y^{t+1} - y^{t} + y^t - z^t \|^2\\
&= \| y^{t+1} - y^{t} - (x^t - x^{t-1}) \|^2\\
&\leq \| y^{t+1} - y^t \|^2 - 2 \langle y^{t+1} - y^t, x^t - x^{t-1} \rangle + \| x^t - x^{t-1} \|^2\\
&\leq (-1 + 2\gamma l) \| y^{t+1} - y^t \|^2 + \| x^t - x^{t-1} \|^2\\
&\leq [(-1 + 2 \gamma l) + (1 + \gamma L)^2] \| y^{t+1} - y^t \|^2.
\end{aligned}
\end{equation}
By combining \eqref{eq200}, \eqref{eq209} and \eqref{eq213}, the desired conclusion follows.\hfill$\square$ \\

{\it Proof of \cref{bounded}.}
From \cref{ass1} (a1), there exists $\zeta^{*} > - \infty$ such that
\begin{equation}\label{ineq10}
\begin{aligned}
\zeta^{*} &\leq F\left(  x - \frac{1}{L} \nabla F(x) \right) \\
&\leq F(x) + \left\langle \nabla F(x), \left( x - \frac{1}{L} \nabla F(x) \right)  - x \right\rangle + \frac{L}{2} \left\|  \left( x - \frac{1}{L} \nabla F(x) \right) - x  \right\|^{2} \\ 
&= F(x) - \frac{1}{2L} \| \nabla F(x) \|^{2}.
\end{aligned}
\end{equation}  
Similarly, from \cref{ass1}(a3), there exists $\eta^{*} > - \infty$ such that
\begin{equation}\label{ineq11}
\eta^{*} \leq H\left(  x - \frac{1}{\beta} \nabla H(x) \right) \leq H(x) - \frac{1}{2\beta} \| \nabla H(x) \|^{2}.\\
\end{equation} 
By \eqref{opti-1}, we have that for any $t \geq 1$
\begin{equation}\label{ineq13}
\| x^{t-1} - y^{t} \|^{2} = \gamma^{2} \| \nabla F(y^{t})\|^{2}.
\end{equation}
By Cauchy-Schwarz inequality and \eqref{algx}, 
 \begin{equation}\label{ineq1000}
\begin{aligned}
\left\langle \nabla H( y^{t} ), z^{t} - y^{t} \right\rangle &\geq  - \frac{1}{4\beta} \| \nabla H( y^{t} ) \|^{2} - \beta \| z^{t} - y^{t} \|^{2}\\
& \geq - \frac{1}{4\beta} \| \nabla H( y^{t} ) \|^{2} - \beta \| x^{t} - x^{t-1} \|^{2}\\
&\geq  - \frac{1}{4\beta} \| \nabla H( y^{t} ) \|^{2} - 2\beta \| x^{t} - y^{t} \|^2 -2\beta \| y^{t} - x^{t-1} \|^{2}.\\
\end{aligned}
\end{equation}
This together with \eqref{ineq10}, \eqref{ineq11}, \eqref{ineq13} and \cref{eq-rela} yields that
 \begin{equation}\label{ineq14}
\begin{aligned}
&\Theta_{\gamma}( x^{1}, y^{1}, z^{1} ) \geq \Theta_{\gamma}( x^{t}, y^{t}, z^{t} )\\
&=F(y^t) + G(z^t) + H(y^t) + \frac{1}{2\gamma}\| 2y^t - z^t - x^t - \gamma \nabla H(y^t) \|^2\\
&~~~ - \frac{1}{2\gamma}\| x^t - y^t + \gamma \nabla H(y^t) \|^2 - \frac{1}{\gamma}\| y^t - z^t \|^2\\
&= F(y^{t}) + G(z^{t}) + H( y^{t} ) + \frac{1}{2\gamma} \| x^{t} - y^{t} \|^{2} - \frac{1}{2\gamma} \| x^{t} - z^{t} \|^{2} + \left\langle \nabla H( y^{t} ), z^{t} - y^{t} \right\rangle \\
&= F(y^{t}) + G(z^{t}) + H( y^{t} ) - \frac{1}{2\gamma} \| x^{t-1} - y^{t} \|^{2} + \frac{1}{2\gamma} \| x^{t} - y^{t} \|^{2} + \left\langle \nabla H( y^{t} ), z^{t} - y^{t} \right\rangle \\
&\geq F(y^{t}) + G(z^{t}) + H( y^{t} ) - \frac{1}{4\beta} \| \nabla H( y^{t} ) \|^2 - ( \frac{1}{2\gamma} + 2\beta ) \| x^{t-1} - y^{t} \|^{2} \\
&~~~+ ( \frac{1}{2\gamma} - 2\beta ) \| x^{t} - y^{t} \|^{2}\\
&\geq  F(y^{t}) + G(z^{t}) + H( y^{t} ) - \frac{1}{4\beta} \| \nabla H( y^{t} ) \|^2 - ( \frac{1}{2\gamma} + 2\beta )\gamma^{2} \| \nabla F( y^{t} ) \|^{2} \\
&~~~+ ( \frac{1}{2\gamma} - 2\beta ) \| x^{t} - y^{t} \|^{2}\\
&\geq \mu F(y^{t}) + (1 - \mu)F(y^{t}) - \frac{1 - \mu}{2L} \| \nabla F(y^{t}) \|^{2} + \left[ \frac{1 - \mu}{2L} - ( \frac{1}{2\gamma} + 2\beta ) \gamma^{2} \right] \| \nabla F(y^{t}) \|^{2}\\
&~~~~+ H(y^{t}) - \frac{1}{4\beta} \| \nabla H(y^{t}) \|^2 + G( z^{t} ) + \left( \frac{1}{2\gamma} - 2\beta \right) \| x^{t} - y^{t} \|^{2}\\
&\geq \mu F(y^{t}) + (1 - \mu) \zeta^{*} + \left[ \frac{1 - \mu}{2L} - ( \frac{1}{2\gamma} + 2\beta ) \gamma^{2} \right] \| \nabla F(y^{t}) \|^{2} + \frac{1- 4\gamma\beta}{2\gamma} \| x^{t} - y^{t} \|^{2}\\
&~~~~ +G(z^{t}) + \nu H(y^{t}) + ( 1 - \nu ) H( y^{t} ) - \frac{1 - \nu }{2\beta} \| \nabla H(y^{t}) \|^2 + \frac{1 - 2\nu}{4\beta} \| \nabla H(y^{t}) \|^2 \\
&\geq \mu F(y^{t}) + ( 1- \mu )\zeta^{*} + \left[ \frac{1 - \mu}{2L} - ( \frac{1}{2\gamma} + 2\beta ) \gamma^{2} \right] \| \nabla F(y^{t}) \|^{2} + \frac{1- 4\gamma\beta}{2\gamma} \| x^{t} - y^{t} \|^{2}\\
&~~~~+ G( z^{t} ) + \nu H(y^{t}) + ( 1 - \nu ) \eta^{*} + \frac{1 - 2\nu }{4\beta} \| \nabla H(y^{t}) \|^2,
\end{aligned}
\end{equation}
where we can choose $\gamma > 0$ small and $\mu,~\nu \in (0,1)$ such that
$$
\frac{1 - \mu}{2L} - ( \frac{1}{2\gamma} + 2\beta ) \gamma^{2},~\frac{1 - 2\nu }{4\beta},~\frac{1- 4\gamma\beta}{2\gamma} > 0.
$$
In the following, we divide into two cases:\\
{\bf Case 1.} $G$ is coercive. It is easy to see from \eqref{ineq14} that $\{ z^t \}$, $\{ \nabla F(y^t) \}$ and $\{ x^t - y^t\}$ are all bounded. So we can get from \eqref{ineq13} that $\{ y^t - x^{t-1}\}$ is bounded which implies that $\{ x^{t} - x^{t-1}\}$ is also bounded. Meanwhile, using \eqref{algx}, we can obtain that $\{ z^t - y^t\}$ is bounded. Thus $\{ y^t \}$ is bounded, because we have shown that $\{ z^t \}$ is bounded. Therefore, we can see that $\{ x^{t} \}$ is bounded by the boundedness of $\{ x^t - y^t \}$.\\
{\bf Case 2.} $F$ or $H$ is coercive. We can immediately get that $\{ y^t \}$ and $\{ x^t - y^t\}$ are bounded. Hence, $\{ x^t \}$ is also bounded. Now, the boundedness of $\{ z^t \}$ follows from \eqref{algx}. \hfill$\square$ \\

\section{Proofs of \cref{clus-point}, \cref{whole-con} and \cref{conv-rate}}
\label{app2}

\vskip0.10in
{\it Proof of \cref{clus-point}.}
Summing  \eqref{de-ineq}  from $t = 1$ to $N - 1 \geq 1$, we  get
\begin{equation}\label{sum}
\Theta_{\gamma} ( x^{N}, y^{N}, z^{N} ) - \Theta_{\gamma} ( x^{1}, y^{1}, z^{1} ) \leq -\Lambda(\gamma) \sum_{t=1}^{N} \| y^{t+1} - y^{t} \|^{2}.
\end{equation}
Suppose that $( x^{*}, y^{*}, z^{*} )$ is a cluster point of sequence $\{ x^{t}, y^{t}, z^{t} \}$, that is, there exists a convergent subsequence $\{ x^{t_j}, y^{t_j}, z^{t_j} \}$, such that
$$
\lim_{ j \to \infty} ( x^{t_{j}}, y^{t_{j}}, z^{t_{j}} ) = (x^{*}, y^{*}, z^{*} ).
$$
Since $\Theta_{\gamma}$ is a lower semi-continuious function and $F$, $G$ are both proper functions, we can take limit with $j \to \infty$ when $N = t_{j}$ in \eqref{sum}, 
\begin{equation}\label{sumineq}
- \infty < \Theta_{\gamma}( x^{*}, y^{*}, z^{*} ) - \Theta_{\gamma}( x^{1}, y^{1}, z^{1} ) \leq -\Lambda(\gamma) \sum_{t=1}^{\infty} \| y^{t+1} - y^{t} \|^{2}. 
\end{equation}
This implies that $lim_{t \to \infty} \| y^{t+1} - y^{t} \|^{2} = 0$. Combining with \cref{x-bounded} and \eqref{algx}, we obtain 
$\lim_{t \to \infty} \| x^{t+1} - x^{t} \| = \lim_{t \to \infty} \| z^{t+1} - y^{t+1} \| = 0$. Thus we get the desired conclusion $(i)$. 

We next prove $(ii)$. Firstly, by \eqref{algx}, we obtain further that $lim_{t \to \infty}\| z^{t+1} - z^{t} \| = 0$. Let $( x^{*}, y^{*}, z^{*} )$ be a cluster point of $\{ ( x^{t}, y^{t}, z^{t} )\}_{t \geq 1}$, assume that $\{ ( x^{t_{j}}, y^{t_{j}}, z^{t_{j}} )\}$ is a convergent subsequence such that
\begin{equation}\label{ineq16}
\lim_{t \to \infty} ( x^{t_j}, y^{t_j}, z^{t_j}) = (x^*, y^*, z^*).
\end{equation}
Then 
\begin{equation}\label{ineq17}
lim_{j \to \infty} ( x^{t_{j}}, y^{t_{j}}, z^{t_{j}} ) = lim_{j \to \infty} ( x^{t_{j - 1}}, y^{t_{j - 1}}, z^{t_{j - 1}} ) = ( x^{*}, y^{*}, z^{*} ). 
\end{equation}
Moreover, using the fact that $z^t$ is the minimizer in \eqref{algz}, we have
\begin{equation}\label{ineq18}
G( z^{t} ) + \frac{1}{2\gamma} \| z^{t} - ( 2 y^{t} - \gamma \nabla 
H(y^t) - x^{t-1}) \|^{2} \leq G( z^{*} ) + \frac{1}{2\gamma} \| z^{*} - (2 y^{t} - \gamma \nabla H(y^t) -x^{t-1}) \|^{2}. 
\end{equation}
Taking  limit along the subsequence $\{t^j\}$ and using \eqref{ineq17} yields
\begin{equation}\label{ineq19}
\lim \sup_{j \to \infty} G( z^{t_{j}} ) \leq G( z^{*} ).
\end{equation}
On the other hand, since $G$ is a lower semi-continuious, we have $\lim\inf_{j \to \infty} G( z^{t_{j}} ) \geq G(z^*).$ Hence
\begin{equation}\label{ineq20}
\lim_{j \to \infty} G( z^{t_{j}} ) = G( z^{*} ). 
\end{equation}
By summing \eqref{opti-1} and \eqref{opti-2} and taking limit along the convergent subsequence $\{(x^{t_j}, y^{t_j}, z^{t_j})\}$, and  applying \eqref{ineq20} and \eqref{subdiffproperty},  we have
\begin{equation}\label{ineq21}
0 \in \nabla F(y^*) + \partial G(y^*) + \nabla H(y^*).
\end{equation} 
This completes the proof. \hfill$\square$ \\

To prove \cref{whole-con}, we need the following lemma.
\begin{lemma}\label{subgradient}
Let \cref{ass1} be satisfied and $H$ be a twice continuously differentiable function with a bounded Hessian, i.e., there exists a constant $M > 0$ such that $\| \nabla H(y) \|_{2} \leq M$ for all $y$. Let $\{ (x^{t}, y^{t}, z^{t})\}_{t \geq 0}$ be a sequence generated by \cref{alg:DYS}. Then, there exists $\tau > 0$ such that for any $t \geq 1$,
\begin{equation}\label{ineq22}
\textmd{dist}  (0, \partial \Theta_{\gamma} (x^t, y^t, z^t)) \leq \tau \|y^{t+1} - y^t\|. 
\end{equation}
\end{lemma}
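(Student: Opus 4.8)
The plan is to exhibit one explicit element of $\partial\Theta_\gamma(x^t,y^t,z^t)$ and to show that its norm is bounded by a constant times $\|y^{t+1}-y^t\|$. Since $F$ is $C^1$ and, under the hypothesis of the lemma, $H$ is $C^2$ with bounded Hessian, the only nonsmooth summand of $\Theta_\gamma$ is $G(z)$; hence, by the sum rule for a nonsmooth function plus a $C^1$ function, the limiting subdifferential splits blockwise,
\[
\partial\Theta_\gamma(x,y,z)=\{\nabla_x\Theta_\gamma(x,y,z)\}\times\{\nabla_y\Theta_\gamma(x,y,z)\}\times\left(\partial G(z)+\nabla_z(\Theta_\gamma-G)(x,y,z)\right).
\]
First I would compute the three smooth partials. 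Abbreviating $a:=2y-z-x-\gamma\nabla H(y)$ and $b:=x-y+\gamma\nabla H(y)$, that is, the vectors inside the two squared norms, the identity $a+b=y-z$ keeps the algebra short and gives $\nabla_x\Theta_\gamma=\frac1\gamma(z-y)$, $\nabla_z(\Theta_\gamma-G)=\frac1\gamma(x-z)+\nabla H(y)$, and $\nabla_y\Theta_\gamma=\nabla F(y)+\frac1\gamma(y-x)-\nabla^2 H(y)(y-z)$, where the last expression uses the (symmetric) chain rule through $\nabla H(y)$ together with the fact that the resulting first-order $\nabla H(y)$ terms cancel.

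Next I would plug in the particular $G$-subgradient produced by the algorithm: the optimality condition \eqref{opti-2}, with $t$ replaced by $t-1$, reads $v^t:=-\frac1\gamma\left(z^t+\gamma\nabla H(y^t)-2y^t+x^{t-1}\right)\in\partial G(z^t)$, so the triple $w^t$ with components $w^t_x=\nabla_x\Theta_\gamma(x^t,y^t,z^t)$, $w^t_y=\nabla_y\Theta_\gamma(x^t,y^t,z^t)$ and $w^t_z=v^t+\frac1\gamma(x^t-z^t)+\nabla H(y^t)$ lies in $\partial\Theta_\gamma(x^t,y^t,z^t)$. I would then simplify each block using $z^t-y^t=x^t-x^{t-1}$ (from \eqref{algx} with $t\mapsto t-1$) and $\nabla F(y^t)=\frac1\gamma(x^{t-1}-y^t)$ (from \eqref{opti-1} with $t\mapsto t-1$). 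A short cascade of cancellations collapses all three blocks to expressions in $x^t-x^{t-1}$ alone:
\[
w^t_x=\frac1\gamma(x^t-x^{t-1}),\qquad w^t_z=-\frac1\gamma(x^t-x^{t-1}),\qquad w^t_y=\left(\frac1\gamma I-\nabla^2 H(y^t)\right)(x^t-x^{t-1}).
\]
Hence $\|w^t\|\le\sqrt{2/\gamma^2+(1/\gamma+M)^2}\,\|x^t-x^{t-1}\|$, and \cref{x-bounded} gives $\|x^t-x^{t-1}\|\le(1+\gamma L)\|y^{t+1}-y^t\|$, so that $\textmd{dist}(0,\partial\Theta_\gamma(x^t,y^t,z^t))\le\|w^t\|\le\tau\|y^{t+1}-y^t\|$ with $\tau:=(1+\gamma L)\sqrt{2/\gamma^2+(1/\gamma+M)^2}$.

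The step requiring genuine care is the computation of $\nabla_y\Theta_\gamma$: because $\gamma\nabla H(y)$ sits inside both squared norms, differentiating in $y$ pulls in $\nabla^2 H(y)$, so one must track the symmetric chain rule and then notice that, thanks to $a+b=y-z$, the first-order $\nabla H(y)$ contributions coming from $H$ and from the quadratic terms cancel exactly, leaving only $-\nabla^2 H(y)(y-z)$. This is precisely where the bounded-Hessian hypothesis enters: it is what converts $\|w^t_y\|$ into a multiple of $\|x^t-x^{t-1}\|$. Everything else — the blockwise sum rule, the substitutions from \eqref{algx}, \eqref{opti-1} and \eqref{opti-2}, and the final appeal to \cref{x-bounded} — is routine bookkeeping.
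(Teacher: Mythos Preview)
Your proof is correct and follows essentially the same route as the paper: compute the three block partials of $\Theta_\gamma$, plug in the specific element of $\partial G(z^t)$ furnished by the optimality condition \eqref{opti-2}, use \eqref{opti-1} and \eqref{algx} (all shifted by one index) to collapse each block to a multiple of $x^t-x^{t-1}$, and finish with \cref{x-bounded}. The only cosmetic difference is that the paper carries out the computation at $(x^{t+1},y^{t+1},z^{t+1})$ and relabels at the end, whereas you work at $(x^t,y^t,z^t)$ directly; note also that your displayed $w^t_y$ has a harmless sign flip (it should read $\bigl(\tfrac1\gamma I-\nabla^2 H(y^t)\bigr)(x^{t-1}-x^t)$), which of course does not affect the norm bound.
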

\begin{proof}
It is easy to compute that for any $t \geq 0$,  
\begin{equation}\label{ineq23}
\begin{aligned}
\nabla_x\Theta_\gamma \left(  x^{t+1}, y^{t+1}, z^{t+1} \right) &= \frac{1}{\gamma} \Big( z^{t+1} - y^{t+1} \Big) \\
& =\frac{1}{\gamma} \Big( x^{t+1} - x^{t} \Big),  
\end{aligned}
\end{equation}
where the last equality follows from \eqref{algx}. Secondly, we compute the subgradient of $\Theta_{\gamma}$ with respect to $z$, we get
\begin{equation}\label{eq400}
\begin{aligned}
& \nabla_z   \Theta_{\gamma} \Big( x^{t+1}, y^{t+1}, z^{t+1} \Big)\\
& = \partial G \Big( z^{t+1} \Big) + \frac{1}{\gamma} \Big( z^{t+1} - 2 y^{t+1} + \gamma \nabla H(y^{t+1}) + x^{t+1} \Big)  - \frac{2}{\gamma} \Big( z^{t+1} - y^{t+1} \Big) \\
& = \partial G \Big( z^{t+1} \Big) + \frac{1}{\gamma}\Big( z^{t+1} - 2 y^{t+1} + \gamma \nabla H(y^{t+1}) + x^t \Big) + \frac{1}{\gamma} \Big( x^{t+1} - x^{t} \Big)\\
& ~~~- \frac{2}{\gamma} \Big( z^{t+1} - y^{t+1} \Big) \\
& \ni -\frac{1}{\gamma} \Big(x^{t+1} - x^t \Big), 
\end{aligned} 
\end{equation}
where the second equality is achieved by adding $\frac{1}{\gamma} x^{t}$ and subtracting it at the same time and the inclusion follows from \eqref{opti-2} and \eqref{algx}. Finally, for the subgradient of $\Theta_{\gamma}$ with respect to $y$, we have 
\begin{equation}\label{ineq24}
\begin{aligned}
&\partial_y \Theta_\gamma \Big( x^{t+1}, y^{t+1}, z^{t+1} \Big) \\
& = \nabla F\Big( y^{t+1} \Big) + \frac{1}{\gamma} \Big(  y^{t+1} - x^{t+1} \Big) + \nabla^{2} H \Big( y^{t+1} \Big) \Big( z^{t+1} - y^{t+1} \Big) \\
& = \nabla F\Big(y^{t+1} \Big) + \frac{1}{\gamma} \Big( y^{t+1} - x^{t} \Big) + \frac{1}{\gamma} \Big( x^{t} - x^{t+1} \Big) + \nabla^{2} H \Big(y^{t+1} \Big) \Big( z^{t+1} - y^{t+1} \Big)\\
& = \frac{1}{\gamma} \Big( x^{t} - x^{t+1} \Big) + \nabla^{2} H \Big(y^{t+1} \Big) \Big( z^{t+1} - y^{t+1} \Big),\\
\end{aligned}
\end{equation}
where we have used the optimization condition \eqref{opti-1}. By the boundedness of the $\nabla^{2}H(y)$, we get
\begin{equation}\label{ineq25}
\begin{aligned}
&\| \partial_y \Theta_\gamma \Big( x^{t+1}, y^{t+1}, z^{t+1} \Big) \|  \\
& \leq \frac{1}{\gamma} \| x^t - x^{t+1} \| + M  \| x^{t+1} - x^{t} \| \\
&\leq \Big( \frac{1}{\gamma} + M \Big) \| x^{t} - x^{t+1} \|.
\end{aligned}
\end{equation}
It follows from \eqref{ineq23}, \eqref{eq400} and \eqref{ineq25} that there exists some constant $\tau >0$ such that whenever $t \geq 1$, we have 
\begin{equation}\label{ineq26}
\textmd{dist}  (0, \partial \Theta_\gamma (x^t, y^t, z^t)) \leq \tau \|y^{t+1} - y^t\|. 
\end{equation}
\end{proof}

{\it Proof of \cref{whole-con}.}
Firstly, we show that the statement $(i)$ holds. It follows from \eqref{de-ineq} that there exists $\Lambda(\gamma)>0 $ such that
\begin{equation}\label{ineq28}
\Theta_\gamma(x^t, y^t, z^t) - \Theta_\gamma(x^{t+1}, y^{t+1}, z^{t+1}) \geq \Lambda(\gamma) \|y^{t+1} - y^t\|^2. 
\end{equation}
Hence, $\Theta_\gamma(x^t, y^t, z^t)$ is nonincreasing. Let $\{(x^{t_i}, y^{t_i}, z^{t_i})\}$ be a convergent subsequence which converges to $(x^*, y^*, z^*)$.  Then, by the lower semicontinuity of $\Theta_\gamma$, we know that the sequence $\{ \Theta_\gamma(x^{t_i}, y^{t_i}, z^{t_i}) \}$ is bounded below. This together with the nonincreasing property of $\Theta_\gamma(x^t, y^t, z^t)$ implies that $\Theta_\gamma(x^t, y^t, z^t)$ is also bounded below. Therefore,  $\lim_{t \to \infty}\Theta_\gamma(x^t, y^t, z^t) = \Theta^*$ exists. We claim that $\Theta^*=\Theta_\gamma(x^*, y^*, z^*)$. Indeed, let $\{(x^{t_j}, y^{t_j}, z^{t_j})\}$ be any sequence that converges to $(x^*, y^*, z^*)$. Then by the lower semicontinuity, we have
\begin{equation}\label{ineq29}
\liminf_{j \to \infty} \Theta_\gamma(x^{t_j}, y^{t_j}, z^{t_j}) \geq \Theta_\gamma(x^*, y^*, z^*). 
\end{equation}
Moreover,  similar to \eqref{ineq17}, \eqref{ineq18} and \eqref{ineq19},  we also have 
\begin{equation}\label{ineq30}
\limsup_{j \to \infty} \Theta_\gamma(x^{t_j}, y^{t_j}, z^{t_j}) \leq \Theta_\gamma(x^*, y^*, z^*). 
\end{equation}
Now we easily get $\Theta^*=\Theta_\gamma(x^*, y^*, z^*)$, as claimed. 

In the next, we prove the second statement $(ii)$. We consider two cases.

 {\bf Case 1.} If $\Theta_\gamma(x^{t_0}, y^{t_0}, z^{t_0}) =\Theta^*$ for some $t_0 \geq 1$, then $\Theta_\gamma(x^{t_0+k}, y^{t_0+k}, z^{t_0+k}) = \Theta_\gamma(x^{t_0}, y^{t_0}, z^{t_0})$ for all $k\geq 0$ since the sequence is nonincreasing. Then from \eqref{ineq28}, we have $y^{t_0+k}=y^{t_0}$ for all $k\geq 0$. By \eqref{xineq}, we see that $x^{t_0+k}=x^{t_0}$ for all $k\geq 0$. These together with \eqref{algx}  show that we also have $z^{t_0 +k} = z^{t_0 }$ for all $k\geq 1$. Thus, the sequence $(x^t, y^t, z^t)$ remains constant  starting with the $(t_0+1)$st iteration.   Hence, the theorem holds trivially when this happens. 
 
 {\bf Case 2.} $\Theta_\gamma(x^{t}, y^{t}, z^{t})  > \Theta^*$ for any $t \geq 1$. 
We will show $\{\| y^{t+1} - y^t\|\}$ is summable. Recall that the function
$$
(x, y, z) \longmapsto \Theta_\gamma (x, y, z)
$$
is a KL function. By the property of KL function, there exist $\eta >0$, a neighborhood $U$ of $(x^*, y^*, z^*)$ and a continuous concave function $\varphi : [0, \eta) \to \mathbb{R}_+$ such that for all $(x, y, z) \in U$ satisfying $\Theta^* <  \Theta_\gamma(x, y, z) < \Theta^* + \eta$, we have 
\begin{equation}\label{ineq31}
\varphi^\prime ( \Theta_\gamma(x, y, z) - \Theta^*) \textmd{dist} (0, \partial \Theta_\gamma(x, y, z) ) \geq 1. 
\end{equation}
Since $U$ is an open set, take $\rho >0$ such that 
\begin{equation}\label{ineq33}
\mathbf{B}_\rho := \{ (x, y, z) : \|y - y^*\| < \rho, \|z - z^*\| < 2\rho, \|x - x^*\| < (2 + \gamma L) \rho\} \subseteq U 
\end{equation}
and set $B_\rho:= \{y: \|y - y^*\| <\rho \}$. From \cref{x-bounded}, we can get
\begin{equation}\label{ineq34}
\|x^t -x^*\| \leq \|x^t - x^{t-1}\| + \|x^{t-1} - x^*\| \leq \|x^t - x^{t-1}\| + (1 + \gamma L) \|y^t - y^*\|.  
\end{equation}
By \cref{clus-point}, there exists $N_0 \geq 1$ such that $\|x^t - x^{t-1}\| < \rho$ whenever $t \geq N_0$.  Hence, it follows that $\|x^t  - x^*\| < (2 + \gamma L)\rho$ whenever $y^t \in B_\rho$ and $t \geq N_0$.  
Applying \eqref{algx}, we also have that whenever $y^t \in B_\rho$ and for $t \geq N_0$, 
\begin{equation}\label{ineq35}
\|z^t - z^*\| \leq \|y^t - y^*\| + \|x^t - x^{t-1}\| < 2\rho.
\end{equation}
Thus, we obtain that if $y^t \in B_\rho$ and $t \geq N_0$, then $(x^t, y^t, z^t) \in \mathbf{B}_\rho \subseteq U$.  Now, by the facts that $(x^*, y^*, z^*) $ is a cluster point, that $\Theta_\gamma(x^t, y^t, z^t) > \Theta^*$ for every $t \geq 1$, and that $\lim_{t\to \infty}\Theta_\gamma(x^t, y^t, z^t) = \Theta^*$, there exists $(x^N, y^N, z^N)$ with $N \geq N_0$ such that
\begin{itemize}
\item[(i)] $y^N \in B_\rho$ and $\Theta^* < \Theta_\gamma(x^N, y^N, z^N) < \Theta^* +\eta$;

\item[(ii)] $\|y^N - y^*\| + \frac{\tau}{\Lambda(\gamma)} \varphi (\Theta_\gamma (x^N, y^N, z^N) - \Theta^*) < \rho$. 
\end{itemize}

Next, we prove that whenever $y^t \in B_\rho$ and $\Theta^* < \Theta_\gamma (x^t, y^t, z^t) < \Theta^* + \eta$ for some $t \geq N_0$, we have
\begin{equation}\label{ineq36}
\|y^{t+1} - y^t\| \leq \frac{\tau}{\Lambda(\gamma)} \Big[ \varphi \big( \Theta_{\gamma}(x^t, y^t, z^t) - \Theta^* \big)  - \varphi \big( \Theta_{\gamma}(x^{t+1}, y^{t+1}, z^{t+1}) - \Theta^* \big)  \Big].
\end{equation}
Recall that $\{ \Theta_{\gamma}(x^t, y^t, z^t) \}$ is non-increasing and $\varphi$ is increasing, \eqref{ineq36} holds obviously if $y^t=y^{t+1}$. Without loss generality, we assume that $y^{t+1} \neq y^t$. Since $y^t \in B_\rho$ and $t \geq N_0$, we have $(x^t, y^t, z^t) \in \mathbf{B}_\rho \subseteq U$. Hence, \eqref{ineq31} holds for $(x^t, y^t, z^t)$. Using \eqref{ineq26}, \eqref{ineq28}, \eqref{ineq31} and the concavity of $\varphi$,  we obtain  that for such $t$, 
\begin{equation}\label{ineq37}
\begin{aligned}
& \tau \|y^{t+1} - y^t\| \cdot \Big[\varphi \big( \Theta_\gamma(x^t, y^t, z^t) - \Theta^* \big) - \varphi \big( \Theta_\gamma(x^{t+1}, y^{t+1}, z^{t+1})  - \Theta^* \big)  \Big] \\
&~~ \geq \textmd{dist} (0, \partial \Theta_\gamma(x^t, y^t, z^t)) \cdot \Big[\varphi \big( \Theta_\gamma(x^t, y^t, z^t) - \Theta^* \big) - \varphi \big( \Theta_\gamma(x^{t+1}, y^{t+1}, z^{t+1})  - \Theta^* \big)  \Big] \\
&~~ \geq \textmd{dist} (0, \partial \Theta_\gamma(x^t, y^t, z^t)) \cdot \varphi^\prime \big(\Theta_\gamma(x^t, y^t, z^t) - \Theta^* \big) \\
&~~~~~~~\cdot \Big[ \Theta_\gamma(x^t, y^t, z^t)  -  \Theta_\gamma(x^{t+1}, y^{t+1}, z^{t+1}) \Big] \\
&~~ \geq \Lambda(\gamma) \|y^{t+1} - y^t\|^2 . 
\end{aligned}
\end{equation}
This implies that \eqref{ineq36} holds immediately. 

We next claim that $y^t \in B_\rho$ for all $t \geq N$.  First, the claim is true whenever $t=N$ by construction. Now, suppose that the claim is true for $t=N, \dots, N+k-1$ for some $k \geq 1$, that is, $y^N, \dots, y^{N+k-1} \in B_\rho$. Note that $\Theta^* < \Theta_\gamma(x^t, y^t, z^t) < \Theta^* + \eta$ for all $t \geq N$ by the choice of $N$ and non-increase property of $\{\Theta_\gamma(x^t, y^t, z^t)\}$. Hence, \eqref{ineq36} can be used for $t=N, \dots, N+k-1$. Thus, for $t=N+k$, we have 
\begin{equation}\label{ineq38}
\begin{aligned}
\| y^{N+k} - y^*\| & \leq \|y^N - y^* \| + \sum_{j=1}^k \|y^{N+j} - y^{N+j-1}\| \\
&  \leq \|y^N -y^*\| + \frac{\tau}{\Lambda(\gamma)}\sum_{j=1}^k \Big[ \varphi \big( \Theta_{\gamma}(x^{N+j-1}, y^{N+j-1}, z^{N+j-1}) - \Theta^* \big) \\
&~~~  - \varphi \big( \Theta_{\gamma}(x^{N+j}, y^{N+j}, z^{N+j}) -\Theta^* \big)  \Big]\\
& \leq \|y^N -y^*\| + \frac{\tau}{\Lambda(\gamma)} \varphi \big( \Theta_{\gamma}(x^{N}, y^{N}, z^{N}) - \Theta^* \big) < \rho. 
\end{aligned}
\end{equation}
Hence, $y^{N+k} \in B_\rho$. By induction, we obtain that $y^t \in B_\rho$ for all $t \geq N$. 

Note that  we have shown that  $y^t \in B_\rho$ and $\Theta^* < \Theta_\gamma(x^t, y^t, z^t) < \Theta^* + \eta$ for all $t \geq N$. Summing \eqref{ineq36} from $t=N$ to $M$ and letting $M \to \infty$, we obtain
\begin{equation}\label{ineq39}
\sum_{t=N}^\infty \|y^{t+1} - y^t\| \leq \frac{\tau}{\Lambda(\gamma)} \varphi \big( \Theta_{\gamma}(x^N, y^N, z^N) - \Theta^* \big)< +\infty.  
\end{equation}
This shows that $\{\| y^{t+1} - y^t\|\}$ is summable and hence the whole sequence $\{y^t\}$ converges  to $y^*$. From this and \cref{x-bounded} we obtain that $\{ \| x^{t+1} - x^{t} \| \}$ is summable and that the sequence $\{x^t\}$ is convergent. Finally, by  \eqref{algx}, we know that $\{ \| z^{t+1} - z^{t} \| \}$ is summable and the convergence of $\{z^t\}$ follows. The proof is completed. \hfill$\square$\\

{\it Proof of \cref{conv-rate}.}
Let $q_t = \Theta_{\gamma}(x^t, y^t, z^t) - \Theta_{\gamma}(x^*, y^*, z^*)$. Then, we have from the \cref{decrease} and \cref{whole-con} $(i)$ that $q_t \geq 0$ for all $t \geq 1$ and $q_t \to 0$ as $t \to \infty$. Furthermore, by \eqref{de-ineq}, we have 
\begin{equation}\label{eq50}
q_t - q_{t+1} \geq \Lambda(\gamma) \| y^{t+1} - y^t \|^2.
\end{equation}
Because of $q^{t+1} \geq 0$, it follows that $\Lambda(\gamma) \| y^{t+1} - y^{t} \|^2 \leq q_t - q_{t+1} \leq q_t$ for all $t \geq 1$. This together with \cref{x-bounded} implies that
\begin{equation}\label{eq51}
\| y^t - z^t \| = \| x^t - x^{t-1} \| \leq (1 + \gamma L) \| y^{t+1} - y^{t} \| \leq \frac{1 + \gamma L}{\sqrt{\Lambda(\gamma)}} \sqrt{q_t},
\end{equation}
where the first equality follows from \eqref{algx}. Adding \eqref{opti-1} and \eqref{opti-2} we have
\begin{equation}\label{eq52}
0 \in \nabla F(y^{t}) + \partial G(z^t) + \nabla H(y^t) + \frac{1}{\gamma}(z^t - y^t).
\end{equation}
This with the Lipschitz continuity of $\nabla F$ and $\nabla H$ yields that
\begin{equation}\label{eq53}
\textmd{dist}(0, \nabla F(z^t) + \partial G(z^t) + \nabla H(z^t)) \leq (L + \beta + \frac{1}{\gamma})\| z^t - y^t \|. 
\end{equation}
Therefore, for all $t \geq 1$,
\begin{equation}\label{eq54}
\textmd{dist}(0, \nabla F(z^t) + \partial G(z^t) + \nabla H(z^t)) \leq (L + \beta + \frac{1}{\gamma})\frac{1+\gamma L}{\sqrt{\Lambda(\gamma)}} \sqrt{q_t}.
\end{equation}
In the following, the estimation of $q_t$ is similar to the proof in many papers, such as see \cite{BDL, BST, LP}, so here we omit the rest of the proof.\hfill$\square$\\

%

\bibliographystyle{siamplain}
\bibliography{reference}


\end{document}